\newtheorem{theorem}{Theorem}
\newtheorem{lemma}[theorem]{Lemma}
\newenvironment{proof}{\noindent\\ \noindent\relax{\sc
     Proof}}{{\samepage\par\nopagebreak\hbox
     to\hsize{\hfill$\Box$}}}
\begin{document}

\title{A decomposable branching process in a Markovian environment}
\author{Vladimir Vatutin, Elena Dyakonova, Peter Jagers, Serik Sagitov%
\thanks{%
Corresponding author: serik@chalmers.se} \and \textit{Steklov Mathematical
Institute, Moscow} \\
\textit{Chalmers University of Technology and University of Gothenburg}}
\date{}
\maketitle

\begin{abstract}
A population has two types of individuals, each occupying an island. One of
those, where individuals of type 1 live, offers a variable environment. Type
2 individuals dwell on the other island, in a constant environment. Only
one-way migration ($1\to2$) is possible. We study the asymptotics of the
survival probability in critical and subcritical cases.
\end{abstract}


\section{Introduction}

Multi-type branching process in random environment is a challenging topic
with many motivations from population dynamics (see e.g. \cite{Be,Ha,Sch}). Very little is known in the general case and in this paper we consider a particular two-type branching process with two key restrictions: the process is decomposable and the final type individuals live in a constant environment. 

The subject can be
viewed as a stochastic model for the sizes of a geographically structured
population occupying two islands. Time is assumed discrete, so that one unit
of time represents a generation of individuals, some living on island 1 and
others on island 2. Those on island 1 give birth under influence of a
randomly changing environment. They may migrate to island 2 immediately
after birth, with a probability again depending upon the current
environmental state. Individuals on island 2 do not migrate and their
reproduction law is not influenced by any changing environment. Our main
concern is the survival probability of the whole population.

An alternative interpretation of the model under study might be a population
(type 1) subject to a changing environment, say in the form of a predator
population of stationary but variable size. Its individuals may mutate into
a second type, no longer exposed to the environmental variation (the
predators do not regard the mutants as prey). Our framework may be also suitable for modeling early carcinogenesis,
a process in which mutant clones repeatedly arise and disappear
before one of them becomes established  \cite{SaSe,Tom}.  See \cite{Als} for yet another possible application.

The model framework is that furnished by Bienaym\'{e}-Galton-Watson (BGW)
processes with individuals living one unit of time and replaced by random
numbers of offspring which are conditionally independent given the current
state of the environment. We refer to such individuals as particles in order
to emphasize the simplicity of their lives. Particles of type 1 and 2 are
distinguished according to the island number they are occupying at the
moment of observation. Our main assumptions are:

\begin{itemize}
\item particles of type 1 form a critical or subcritical branching process
in a random environment,

\item particles of type 2 form a critical branching process which is
independent of the environment.
\end{itemize}
Let  $X_{n}$ and $Z_{n}$ be the numbers of particles of type 1 and of type 2, respectively, present at time $n$. Throughout this paper it is assumed (unless otherwise specified) that $X_{0}=1$ and $Z_{0}=0$. We investigate asymptotics of the survival probability $\mathbb{P}\left[X_{n}+Z_{n}>0\right]$ as $n\to\infty$. In all cases addressed here we have 
$$\mathbb{P}\left[ X_{n}>0\right]=o\left(\mathbb{P}[ X_{n}+Z_{n}>0]\right).$$
Therefore, in view of 
\begin{equation}
\mathbb{P}\left[ Z_{n}>0\right] \leq \mathbb{P}\left[ X_{n}+Z_{n}>0\right]
\leq \mathbb{P}\left[ X_{n}>0\right] +\mathbb{P}\left[ Z_{n}>0\right],
\label{EstEqv}
\end{equation}%
we focus on the asymptotic behavior of $\mathbb{P}\left[Z_{n}>0\right]$.

In Section \ref{Sfix} we recall known facts for constant environments. They
will then be compared to the results of this paper on random environments.
In Section \ref{Siid} we describe IID environments (Independent and
Identically Distributed environmental states), and then in Section \ref{Sme}
Markovian environments. The main results of the paper are
\begin{itemize}
\item Theorem \ref{L_general} in Section \ref{ScI} on the
critical case with an IID environment,
\item  Theorem \ref{T_Dsubcrit} in Section \ref{S_S} on the subcritical case with an IID environment,
\item   Theorem \ref{L_Markov} in Section \ref{S_CM} on the critical case with a Markovian environment,
\item Theorem \ref{L_subMark} in Section \ref{S_SM} on subcritical case with a Markovian environment.
\end{itemize}
Theorems  \ref{L_Markov} and \ref{L_subMark} treating the case of Markovian environment are extensions of Theorems  \ref{L_general} and \ref{T_Dsubcrit}  obtained under rather restrictive conditions and yielding qualitatively the same asymptotic behavior as in the case of IID environment.

\begin{equation*}
\mbox{\parbox{10cm}{{\it Notation:} in asymptotic formulae
constants denoted by the same letter $c$ are always assumed to be
fixed and independent of the parameter that tends to infinity (or
zero).}}
\end{equation*}

\section{Two-type decomposable branching processes}

\label{Sfix} Consider a two-type BGW-process initiated at time zero by a
single individual of type 1. We focus on the decomposable case
where type 1 particles may produce particles of types 1 and 2 while the type
2 particles can give birth only to type 2 particles. Put

\begin{itemize}
\item $Y_{n}:=$ the number of type 2 daughters produced by the particles of type 1 present at time $n$, in particular, $Y_0=Z_1$,

\item $T:=$ the first time $n$ when $X_{n}=0$, so that $\{T>n\}=\{X_n>0\}$,

\item $S_{n}:=\sum_{k=0}^{n-1}X_{k}$, so that $S_{\rm T}$ gives the total number
ever  of type 1 particles.

\item $W_{n}:=\sum_{k=0}^{n-1}Y_{k}$, so that $W_{\rm T}$ gives the total number
of type 2 daughters produced by all $S_{\rm T}$ particles of type 1.
\end{itemize}

The aim of this section is to summarize what is already known about such
branching processes in the case of a constant environment. This will pave
our way in terms of notation and basic manipulation with generating
functions towards branching processes in IID random and then Markovian
environments.

If the environment is constant from generation to generation, two-type
decomposable BGW-processes are fully described by a pair of probability
generating functions 
\begin{align*}
f(s_{1},s_{2})& :=\mathbb{E}\left[ s_{1}^{\xi _{1}}s_{2}^{\xi _{2}}\right] ,
\\
h(s)& :=\mathbb{E}\left[ s^{\eta }\right] ,
\end{align*}%
where $\xi _{1}$ and $\xi _{2}$ represent the numbers of daughters of type 1
and 2 of a mother of type 1, while $\eta $ stands for the number of
daughters (necessarily of type 2) of a mother of type 2. Let 
\begin{align*}
\mu _{1}& :=\mathbb{E}\left[ \xi _{1}\right] =\frac{\partial f(s_{1},s_{2})}{%
\partial s_{1}}\left\vert _{s_{1}=s_{2}=1}\right. , \\
\mu _{2}& :=\mathbb{E}\left[ \xi _{1}(\xi _{1}-1)\right] =\frac{\partial
^{2}f(s_{1},s_{2})}{\partial s_{1}^{2}}\left\vert _{s_{1}=s_{2}=1}\right. ,
\\
\theta _{1}& :=\mathbb{E}\left[ \xi _{2}\right] =\frac{\partial
f(s_{1},s_{2})}{\partial s_{2}}\left\vert _{s_{1}=s_{2}=1}\right. , \\
\theta _{2}& :=\mathbb{E}\left[ \xi _{2}(\xi _{2}-1)\right] =\frac{\partial
^{2}f(s_{1},s_{2})}{\partial s_{2}^{2}}\left\vert _{s_{1}=s_{2}=1}\right. ,
\\
m_{1}& :=\mathbb{E}\left[ \eta \right] =h^{\prime }(1), \\
m_{2}& :=\mathbb{E}\left[ \eta (\eta -1)\right] =h^{\prime \prime }(1),
\end{align*}%
be the first two moments of the reproduction laws. Concerning the second
type of particles we assume that 
\begin{equation}
m_{1}=1,\quad m_{2}\in (0,\infty ),  \label{Crit}
\end{equation}%
implying that the probability of extinction 
\begin{equation*}
Q_{n}:=\mathbb{P}[Z_{n}=0|X_{0}=0,Z_{0}=1]
\end{equation*}%
(of a single-type BGW-process evolving in constant environment with the
probability generating function $h(s)$) satisfies \cite[Ch. I.9]
{AN72}
\begin{equation}
1-Q_{n}\sim \frac{2}{m_{2}n},\quad n\rightarrow \infty .  \label{ProbExt}
\end{equation}%
It follows that 
\begin{equation}
a_{n}:=-\log f(1,Q_{n})\sim 1-f(1,Q_{n})\sim {\frac{2\theta _{1}}{m_{2}n}},\
n\rightarrow \infty .  \label{an}
\end{equation}%
We will be interested in two kinds of reproduction regimes for particles of
type 1, critical and subcritical. In the constant environment setting with $%
\mu _{2}\in (0,\infty )$, the critical case corresponds to $\mu _{1}=1$ and
the subcritical case is given by $\mu _{1}\in (0,1)$. In the critical case
with a constant environment we have 
\begin{equation}
\mathbb{P}\left[ X_{n}>0\right] =\mathbb{P}\left[ T>n\right] \sim \frac{2}{%
\mu _{2}n},\ n\rightarrow \infty ,  \label{sucr}
\end{equation}%
and according to \cite[Theorem 1]{FN} 
\begin{equation}
\mathbb{P}\left[ X_{n}+Z_{n}>0\right] \sim \mathbb{P}\left[ Z_{n}>0\right]
\sim \frac{2\sqrt{\theta _{1}}}{\sqrt{m_{2}\mu _{2}n}},\ n\rightarrow \infty
.  \label{Asyl22}
\end{equation}

Next we outline a proof of \eqref{Asyl22} based on the representation 
\begin{align}
\mathbb{P}\left[ Z_{n}>0\right] & =\mathbb{E}\left[ 1-\prod%
\limits_{k=0}^{n-1}Q_{n-k}^{Y_{k}}\right] =\mathbb{E}\left[
1-\prod\limits_{k=0}^{n-1}f^{X_{k}}(1,Q_{n-k})\right]  \notag  \label{repa}
\\
& =\mathbb{E}\left[ 1-e^{-\sum_{k=0}^{n-1}X_{k}a_{n-k}}\right] ,
\end{align}%
preparing for the proof in the random environment case, to be given in
Section \ref{ScI}. Thanks to \eqref{sucr} and  \eqref{EstEqv},
 it is enough to verify that 
\begin{equation*}
\mathbb{P}\left[ Z_{n}>0\right] \sim \frac{2\sqrt{\theta _{1}}}{\sqrt{%
m_{2}\mu _{2}n}},\quad n\rightarrow \infty,
\end{equation*}%
in order to prove \eqref{Asyl22}. However, by the branching property the total progeny of a single-type
branching process $S_{\rm T}$ is 1 plus $\xi _{1}$ independent daughter copies
of $S_{\rm T}$. In terms of the Laplace transform 
\begin{equation*}
\phi (\lambda )=e^{-\lambda }f(\phi (\lambda ),1),
\end{equation*}%
where $\phi (\lambda ):=\mathbb{E}[e^{-\lambda S_{\rm T}}]$. As $\lambda
\rightarrow 0$, a Taylor expansion of $f(\phi (\lambda ),1)$
as a function of $1-\phi (\lambda )$ yields 
\begin{equation}
1-\phi (\lambda )=1-e^{-\lambda }+e^{-\lambda }\mu _{1}(1-\phi (\lambda
))-e^{-\lambda }{\frac{\mu _{2}}{2}}(1-\phi (\lambda ))^{2}(1+o(1)).
\label{phi}
\end{equation}%
For $\mu _{1}=1$, after removing the negligible terms, we get a quadratic equation whose solution shows that 
\begin{equation*}
1-\phi (\lambda )\sim \sqrt{{2\lambda /\mu _{2}}},\quad \lambda \rightarrow
0.
\end{equation*}%
Replacing $\lambda$ by $a_n$ and using \eqref{an}, we obtain
\begin{equation*}
\mathbb{E}\left[ 1-e^{-S_{\rm T}a_{n}}\right] \sim \frac{2\sqrt{\theta _{1}}}{%
\sqrt{m_{2}\mu _{2}n}},\ n\rightarrow \infty.
\end{equation*}%
It remains to verify, see \eqref{repa}, that 
\begin{equation*}
\mathbb{E}\left[ 1-e^{-\sum_{k=0}^{n-1}X_{k}a_{n-k}}\right] \sim \mathbb{E}%
\left[ 1-e^{-S_{\rm T}a_{n}}\right] ,\ n\rightarrow \infty .
\end{equation*}%
This holds, indeed, since by \eqref{sucr} and for any fixed $\epsilon >0$
the probability $\mathbb{P}[T>n\epsilon ]$ is much smaller than the target
value of order $c/\sqrt{n}$. (In \cite{VS} and \cite{Z} infinite second
moments in decomposable two-type critical processes were allowed.)

On the other hand, in the subcritical case \eqref{phi} implies that 
\begin{equation*}
1-\phi(\lambda)\sim\lambda/(1-\mu_1),\quad \lambda\to0,
\end{equation*}
so that by \eqref{an} 
\begin{equation*}
\mathbb{E}\left[ 1-e^ {- S_{\rm T}a_{n}} \right] \sim \frac{2\theta_1}{%
m_2(1-\mu_1)n}, \ n\rightarrow \infty.
\end{equation*}
In view of $\mathbb{P}\left[X_{n}>0\right]\sim c\mu_1^n$ we conclude that in
the subcritical case 
\begin{equation}  \label{subcr}
\mathbb{P}\left[ X_{n}+Z_{n}>0\right] \sim\mathbb{P}\left[Z_{n}>0\right]\sim%
\frac{2\theta_1}{m_2(1-\mu_1)n}, \ n\to\infty.
\end{equation}%
See \cite{Og} for a comprehensive study of subcritical decomposable
branching processes in a constant environment.

\section{Branching processes in a random environment}

\label{Sre}

A randomly changing environment for BGW-processes is modeled by a random
sequence of probability generating functions for the offspring distributions
of consecutive generations. Throughout this paper we assume that the
offspring distribution for type 2 particles is the same across the different
states of the environment and characterized by the same generating function $%
h(s)$. This restriction greatly simplifies analysis still allowing new
interesting asymptotic regimes.

We consider two types of stationarily changing environments: IID and
Markovian.

\subsection{IID environment}
\label{Siid} 
Our description of the IID environment case starts with a
simple illustration based on just two alternative bivariate generating
functions $f^{(1)}(s_1,s_2)$ and $f^{(2)}(s_1,s_2)$ with mean offspring
numbers $(\mu^{(1)}_1,\theta^{(1)}_1)$ and $(\mu^{(2)}_1,\theta^{(2)}_1)$
respectively. 
We assume that at each time $n$ the environment is say "good" with
probability $\pi_1$, so that the type 1 particles reproduce independently
according $f^{(1)}(s_1,s_2)$, and with probability $\pi_2=1-\pi_1$ the
environment is "bad" and particles of type 1 reproduce according to the $%
f^{(2)}(s_1,s_2)$ law. In other words, the generating function $f(s_1,s_2)$
should be treated as a random function having distribution
\[\mathbb P[f(s_1,s_2)=f^{(1)}(s_1,s_2)]= \pi_1,\quad \mathbb P[f(s_1,s_2)=f^{(2)}(s_1,s_2)]= \pi_2.\]
In particular,
the vector of the mean offspring numbers $(\mu_1,\theta_1)$ takes values $%
(\mu^{(1)}_1,\theta^{(1)}_1)$ and $(\mu^{(2)}_1,\theta^{(2)}_1)$ with
probabilities $\pi_1$ and $\pi_2$.

More generally, our two-type branching process in an IID random environment
is characterized (besides the fixed reproduction law $h(s)$ for the type 2
particles) by a sequence of generating functions $\{f_{n}(s_{1},s_{2})%
\}_{n=0}^{\infty }$ independently drawn from a certain distribution over
probability generating functions so that 
\begin{equation}
f_{n}(s_{1},s_{2})\overset{d}{=}f(s_{1},s_{2}).  \label{idd}
\end{equation}%
In this setting the respective conditional moments $\mu _{1}$, $\mu _{2}$, $%
\theta _{1}$, and $\theta _{2}$ should be treated as random variables. An
important role is played by the random variable $\zeta :=\log \mu _{1}$
representing the step size of the so-called associated random walk \cite%
{AGKV} formed by the partial sums $\zeta _{0}+\dots +\zeta _{n-1}$ with $%
\zeta _{i}\overset{d}{=}\zeta $. 
\begin{equation}
\mbox{\parbox{10cm}{{\it Notation:} characteristics of the
reproduction law in generation $n$ are denoted by adding an extra lower
index $n$ to the generic notation, like in \eqref{idd}. }}  \label{abrk}
\end{equation}

\subsection{Markovian environment}

\label{Sme} One way to relax the IID assumption on the environment is to
allow for Markovian dependence among its consecutive states. We implement
this by modelling changes in terms of an irreducible aperiodic
positive recurrent Markov chain $\{e_{n}\}_{n=0}^{\infty }$ with countably
many states $\{1,2,\ldots \}$. 
Assuming a stationary initial distribution $(\pi _1,\pi_2,\ldots)$, we associate with each
state $i$ of this chain a probability generating function $%
f^{(i)}(s_{1},s_{2})$, so that the changing environment for the branching
process is governed by the sequence of identically reproduction laws 
\begin{equation*}
f_{n}(s_{1},s_{2}):=f^{(e_{n})}(s_{1},s_{2}),\quad n=0,1,\dots
\end{equation*}%
with Markovian dependence. Due to the stationarity we can again write %
\eqref{idd} and use the same notation for the marginal moments of the
reproduction laws as in the IID case.

To build a bridge to the IID environment case we use an embedding through a
sequence of regeneration moments $\{\tau _{k}\}_{k=0}^{\infty }$ defined as 
\begin{equation}
\tau _{0}:=0,\tau _{k+1}:=\min \{n>\tau _{k}:e_{n}=e_{0}\}.  \label{reg}
\end{equation}%
The times $\tau _{k+1}-\tau _{k}$ between consecutive regenerations are
independent and all distributed as $\tau :=\tau _{1}$. The embedded process $(\hat X_{n},\hat Z_{n})$ defined as 
\begin{equation*}
(\hat X_{n},\hat Z_{n}):=(X_{\tau _{n}},Z_{\tau _{n}}),\quad
n=0,1,\dots
\end{equation*}%
is a decomposable branching process in an IID environment with two types of
particles $\hat 1$ and $\hat 2$ and conditional reproduction
generating functions 
\begin{align}
\hat f(s_{1},s_{2})& :=f^{(e_{0})}\big(f^{(e_{1})}\big(\dots \big(%
f^{(e_{\tau -1})}(s_{1},s_{2}),h(s_{2})\big)\dots \big),h_{\tau -1}(s_{2})%
\big),  \label{DefF} \\
\hat h(s)& :=h(h(\dots h(s)\dots ))=h_{\tau }(s),  \label{Defh}
\end{align}%
where $h_{k}(s)$ stands for the $k$-fold iteration of $h(s)$.
\begin{quote}
 \mbox{\parbox{10cm}{{\it Notation:} for all characteristics of the
embedded process $(\hat X_{n},\hat Z_{n})$ and related constants appearing in the asymptotic formulae we use the same notation as for the process $(X_{n},Z_{n})$ in the IID case just adding the hat sign. }}  

\end{quote}

The key difference from the IID case is that the reproduction law for the $%
\hat 2$-type particles is dependent on the random environment.
However, this dependence is of specific nature which we are able to manage
using the law of large numbers for renewal processes. Notice that on its
own the $\hat 2$-type particles form a so-called degenerate critical
branching process in an IID random environment \cite{AGKV}: its conditional
offspring mean is deterministic $\hat m_{1}=1$. Meanwhile, the
conditional variance is random $\hat m_{2}=\tau m_{2}$.

Taking the first and second order derivatives of \eqref{DefF}, we can
express the moments of the reproduction law of the embedded process in terms of the moments of
the consecutive reproduction laws with Markovian dependence. In what follows
we use \eqref{abrk} again, while keeping in mind that the sequence $(\mu
_{1,k},\mu _{2,k},\theta _{1,k},\theta _{2,k})_{k=0}^{\tau -1}$ now consists
of dependent random vectors. It can be shown that 
\begin{align*}
\hat \mu _{1}& =\prod_{k=0}^{\tau -1}\mu _{1,k},\quad \hat \mu _{2}=\hat \mu _{1}\sum\limits_{k=0}^{\tau -1}\frac{\mu _{2,k}}{\mu _{1,k}}%
\prod_{i=k+1}^{\tau -1}\mu _{1,i}, \\
\hat \theta _{1}& =\sum\limits_{k=0}^{\tau -1}\theta
_{1,k}\prod_{i=0}^{k-1}\mu _{1,i},
\end{align*}%
where, as usual, the product of the elements of an empty set is one. Furthermore, setting 
\begin{equation*}
A_{k,n}=\sum_{j=k}^{n}\theta _{1,j}\prod\limits_{i=k}^{j-1}\mu _{1,i}
\end{equation*}%
we can write 
\begin{align*}
\hat \theta _{2}& =\sum_{k=0}^{\tau -1}\theta
_{2,k}\prod\limits_{i=0}^{k-1}\mu _{1,i} \\
& +\sum_{k=0}^{\tau -2}\left\{ \mu _{2,k}A_{k+1,\tau -1}^{2}+2\mu
_{1,k}\theta _{1,k}A_{k+1,\tau -1}+\sigma ^{2}\left( \tau -1-k\right) \theta
_{1,k}\right\} \prod\limits_{i=0}^{k-1}\mu _{1,i}.
\end{align*}

\begin{lemma}\hspace{-1.5mm}. \label{pro}
Let
\begin{equation}
\sum_{k=0}^{\infty }\mathbb{E}\left[ |\zeta _{k}|1_{\{\tau \geq k+1\}}\right]
<\infty.  \label{fub}
\end{equation}%
For the following sum of a random number of random variables 
\begin{equation*}
\hat \zeta :=\sum_{k=0}^{\tau -1}\zeta _{k},\quad \zeta _{k}:=\log \mu
_{1,k}=\zeta (e_{k}),
\end{equation*}%
a version of the Wald identity holds: $\mathbb{E}[\hat  \zeta]=\mathbb{E}[ \tau ]
\mathbb{E}[\zeta]$.
\end{lemma}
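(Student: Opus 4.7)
My plan is to reduce the identity to the classical occupation-measure formula (Kac's lemma) for an irreducible aperiodic positive recurrent chain, with hypothesis \eqref{fub} playing exactly the role of licensing the interchange of expectation and infinite sum.

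I would begin by rewriting the random sum first in ``horizontal'' form and then by regrouping the summands according to the state visited:
\[
\hat\zeta \;=\; \sum_{k=0}^{\infty}\zeta_k\,\mathbf{1}_{\{\tau\geq k+1\}} \;=\; \sum_{j}\zeta(j)\,N_j,\qquad N_j:=\sum_{k=0}^{\tau-1}\mathbf{1}_{\{e_k=j\}}.
\]
Hypothesis \eqref{fub} is equivalent, by Tonelli, to $\sum_j|\zeta(j)|\,\mathbb{E}[N_j]<\infty$, which licenses Fubini and gives $\mathbb{E}[\hat\zeta]=\sum_j\zeta(j)\,\mathbb{E}[N_j]$. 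The crucial input now is the standard cycle formula: conditionally on $e_0=i$, the expected occupation of state $j$ during one return cycle is $\mathbb{E}_i[N_j]=\pi_j/\pi_i$, and in particular $\mathbb{E}_i[\tau]=1/\pi_i$ (Kac's lemma). Averaging against the stationary initial distribution yields $\mathbb{E}[N_j]=\pi_j\,\mathbb{E}[\tau]$, so that
\[
\mathbb{E}[\hat\zeta] \;=\; \mathbb{E}[\tau]\sum_j\zeta(j)\pi_j \;=\; \mathbb{E}[\tau]\,\mathbb{E}[\zeta].
\]

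The main obstacle I anticipate is the bookkeeping of absolute convergence when the state space is countably infinite, since the conditional means $\mathbb{E}_i[\tau]=1/\pi_i$ can be arbitrarily large and naive averaging risks divergent sums or ill-defined $\infty\cdot 0$ products. I would handle this by first deriving the identity conditionally on $e_0=i$, where Kac's lemma applies verbatim and every intermediate sum is finite, to obtain $\mathbb{E}_i[\hat\zeta]=\mathbb{E}[\zeta]\cdot\mathbb{E}_i[\tau]$ via the direct rearrangement $\sum_k\mathbb{E}_i[\zeta_k\mathbf{1}_{\{\tau\geq k+1\}}]=\sum_j\zeta(j)\mathbb{E}_i[N_j]$, whose absolute convergence is inherited from \eqref{fub}. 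Averaging over $e_0\sim\pi$ at the end then invokes \eqref{fub} once more as the dominating integrability needed to pull the factor $\mathbb{E}[\tau]$ out of the sum and complete the proof.
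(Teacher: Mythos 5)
Your proof is correct and follows essentially the same route as the paper's: both rest on the cycle-occupation identity (the expected number of visits to state $j$ during a regeneration cycle begun at $i$ equals $\pi_j/\pi_i$, which in the paper appears as the stationary measure $\mu_j(i)=c_j\pi_i$ with $\sum_j \pi_j c_j=\mathbb{E}[\tau]$, i.e.\ Kac's lemma in aggregated form) combined with a Fubini interchange licensed by \eqref{fub}. The only cosmetic difference is that you normalize each conditional cycle measure via Kac's lemma before averaging over $e_0\sim\pi$, whereas the paper averages first and only ever needs the aggregate constant $\sum_j\pi_j c_j=\mathbb{E}[\tau]$.
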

\begin{proof}.
For any state $j$ consider the function
\begin{equation*}
\mu _{j}(i):=\sum_{n=0}^{\infty }\mathbb{P}[e_{n}=i,\tau >n|e_{0}=j].
\end{equation*}%
According to  \cite[Th.6.5.2]{Du} this defines a stationary measure which is necessarily of the form $\mu_{j}(i)=c_{j}\pi _{i}$. The constants $c_j$ are such that
\begin{align*}
\sum_{j=1}^{\infty }\pi _{j}c_{j}& =\sum_{j=1}^{\infty }\pi
_{j}\sum_{i=1}^{\infty }\mu _{j}(i)=\sum_{j=1}^{\infty }\pi
_{j}\sum_{n=0}^{\infty }\sum_{i=1}^{\infty }\mathbb{P}[e_{n}=i,\tau
>n|e_{0}=j] \\
& =\sum_{j=1}^{\infty }\pi _{j}\mathbb{E}\left[\tau |e_{0}=j\right] =%
\mathbb{E}[\tau].
\end{align*} 
It follows  that
\begin{align*}
\mathbb{E}[\hat \zeta]& =\sum_{k=1}^{\infty
}\sum_{n=0}^{k-1}\mathbb{E}\left[ \zeta _{n}1_{\{\tau =k\}}\right]
=\sum_{n=0}^{\infty }\mathbb{E}\left[ \zeta (e_{n})1_{\{\tau >n\}}\right] \\
& =\sum_{n=0}^{\infty }\sum_{i=1}^{\infty }\zeta (i)\mathbb{P}[e_{n}=i,\tau>n]=\sum_{i=1}^{\infty }\zeta (i)\sum_{j=1}^{\infty }\pi _{j}\mu_{j}(i)\\
&=\sum_{i=1}^{\infty }\zeta (i)\pi _{i}\mathbb{E}[\tau]=%
\mathbb{E}[\tau] \mathbb{E}[\zeta].
\end{align*}%
\end{proof}

%

Developing the example of two environmental states from Section \ref{Siid},
let us consider a Markov chain $\{e_{n}\}_{n=0}^{\infty }$ with transition
probabilities 
\begin{equation*}
\left( 
\begin{array}{cc}
1-d\pi _{2} & d\pi _{2} \\ 
d\pi _{1} & 1-d\pi _{1}%
\end{array}%
\right) ,\quad 0<d<\min \Big({\frac{1}{\pi _{1}}},{\frac{1}{\pi _{2}}}\Big)
\end{equation*}%
and a stationary distribution $(\pi _{1},\pi _{2})$. (Notice that $d=1$
corresponds to the IID case.) Under stationarity the
regeneration time satisfies
\begin{align*}
\mathbb{P}\left[ \tau =1\right] & =\pi _{1}(1-d\pi _{2})+\pi _{2}(1-d\pi
_{1})=1-2\pi _{1}\pi _{2}d, \\
\mathbb{P}\left[ \tau =k\right] & =\pi _{1}d\pi _{2}(1-d\pi _{1})^{k-2}d\pi
_{1}+\pi _{2}d\pi _{1}(1-d\pi _{2})^{k-2}d\pi _{2} \\
& =d\pi _{1}\pi _{2}\Big(d\pi _{1}(1-d\pi _{1})^{k-2}+d\pi _{2}(1-d\pi
_{2})^{k-2}\Big),\quad k\geq 2,
\end{align*}%
implying that
\begin{equation*}
\mathbb{E}\left[ \tau -1\right] =1,\quad \mathbb{E}\left[ \tau (\tau -1)%
\right] ={\frac{2}{d\pi _{1}\pi _{2}}}-{\frac{4}{d}}.
\end{equation*}%
If $(b_{1},b_{2})$ are the two possible values for $\zeta $, we can write $%
\mathbb{E}[\zeta ]=\pi _{1}b_{1}+\pi _{2}b_{2}$ and 
\begin{align*}
\mathbb{E}[\hat \zeta]& =\mathbb{E}\left[ \mathbb{E}[\hat \zeta|\tau ] ;\tau =1\right] +\mathbb{E}\left[ \mathbb{E}%
[\hat  \zeta|\tau ];\tau \geq 2\right] \\
& =\pi _{1}(1-d\pi _{2})b_{1}+\pi _{2}(1-d\pi _{1})b_{2} \\
& \quad +\pi _{1}d\pi _{2}\left( b_{1}+{\frac{b_{2}}{d\pi _{1}}}\right) +\pi
_{2}d\pi _{1}\left( b_{2}+{\frac{b_{1}}{d\pi _{2}}}\right) =2\mathbb{E}[\zeta],
\end{align*}%
in full agreement with Lemma \ref{pro}. %

\section{Critical processes in IID environment}

\label{ScI}

The single type critical branching process with an IID environment displays
an asymptotic behavior that is in stark contrast with the constant
environment formula \eqref{sucr}. According to \cite[Th.1]{GK00}, if

\begin{equation}
\mathbb{E}[\zeta ]=0,\quad \mathrm{Var}[\zeta] \in
\left( 0,\infty \right),  \label{ReCritical}
\end{equation}
\begin{equation}
\mathbb{E}\left[\mu _{2}\mu _{1}^{-2}\left( 1+\max \left( 0,\log \mu
_{1}\right) \right) \right] <\infty ,  \label{ReC}
\end{equation}%
then for some positive constant $c$
\begin{equation}
\mathbb{P}\left[ X_{n}>0\right] =\mathbb{P}\left[ T>n\right] \sim \frac{c}{%
\sqrt{n}},\ n\rightarrow \infty .  \label{koz}
\end{equation}%
(A much more general limit theorem is obtained in \cite{AGKV}.) The
following theorem shows that in the decomposable case the difference between
 constant and random environments 
is even more striking. For constant environments the survival probability
decays as $c/\sqrt{n}$, see \eqref{Asyl22}, but in random environments the
decay is like $c/\log n$.

\begin{theorem}\hspace{-1.5mm}.
\label{L_general} Consider a critical decomposable branching process in an
IID environment satisfying \eqref{Crit} \eqref{ReCritical}, \eqref{ReC}, and 
\begin{equation}
\mathbb{E}\left[ \mu _{1}^{-1}\right] <\infty .  \label{Aff}
\end{equation}%
If for some positive $\alpha $ 
\begin{align}
\mathbb{P}\left[ \theta _{1}<1/x\right]& =o\left( (\log x)^{-3-\alpha
}\right) ,\ x\rightarrow \infty ,  \label{c0} \\
\mathbb{P}\left[ \theta _{1}>x\right]& =o\left( (\log x)^{-3-\alpha
}\right) ,\ x\rightarrow \infty ,  \label{As_theta} \\
\mathbb{P}\left[\theta _{2}>x\theta _{1}\right]& =o\left( (\log
x)^{-3-\alpha }\right) ,\ x\rightarrow \infty ,  \label{lambda}
\end{align}%
then there exists a constant $K_{0}$ such that 
\begin{equation*}
\mathbb{P}\left[ Z_{n}>0\right]
\sim \frac{K_{0}}{\log n},\ n\rightarrow \infty .  
\end{equation*}
\end{theorem}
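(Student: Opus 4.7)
The plan is to adapt the strategy sketched after equation \eqref{repa} for the constant-environment case. A generation-by-generation conditioning on the type 1 trajectory and on the environment yields the analogue
\begin{equation*}
\mathbb{P}[Z_n>0] = \mathbb{E}\left[1-\exp\left(-\sum_{k=0}^{n-1}X_k\,a_{k,n-1-k}\right)\right],\qquad a_{k,m}:=-\log f_k(1,Q_m).
\end{equation*}
The type 2 reproduction law is fixed, so \eqref{ProbExt} still holds and a Taylor expansion of $f_k$ at $(1,1)$ gives $a_{k,m}\sim 2\theta_{1,k}/(m_2 m)$ as $m\to\infty$. The tail assumptions \eqref{c0}--\eqref{As_theta} will make this approximation uniform after excising a vanishing set of atypical environments.

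Next, \eqref{koz} gives $\mathbb{P}[X_n>0]=O(n^{-1/2})=o(1/\log n)$, so \eqref{EstEqv} lets us focus on $\mathbb{P}[Z_n>0]$ alone. Since $X_k=0$ for $k\geq T$ and $\mathbb{P}[T>n^{1-\varepsilon}]=O(n^{-(1-\varepsilon)/2})$ by \eqref{koz}, the contribution from $k\geq n^{1-\varepsilon}$ to the sum in the exponent is negligible, and replacing $a_{k,n-1-k}$ by its leading term $2\theta_{1,k}/(m_2 n)$ on the complement incurs only a multiplicative $1+o(1)$ error. The problem is therefore reduced to evaluating
\begin{equation*}
\mathbb{E}\!\left[1-\exp\!\Big(-\tfrac{2}{m_2 n}R\Big)\right],\qquad R:=\sum_{k=0}^{T-1}X_k\theta_{1,k}.
\end{equation*}
By a standard Tauberian argument for Laplace transforms of slowly varying tails, the desired rate $K_0/\log n$ would follow from $\mathbb{P}[R>r]\sim K/\log r$ as $r\to\infty$ for a suitable constant $K$.

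The heuristic source of this slowly varying tail is the maximum of the associated random walk. Conditionally on the environment, $X_k$ is well-approximated by $e^{S_k}$ on $\{T>k\}$ in the sense used in \cite{GK00, AGKV}, so $R$ is dominated by a term of the form $e^{S_{k^*}}\theta_{1,k^*}$ at the epoch $k^*$ where $S_k+\log\theta_{1,k}$ is near its running maximum. Under \eqref{ReCritical} the distribution of this maximum, when weighted by the life-span of a critical excursion and combined with the mild tails on $\theta_1$ provided by \eqref{c0}--\eqref{As_theta}, produces a logarithmic tail and hence the $1/\log n$ scaling.

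The main obstacle is to make this heuristic quantitative: one needs a sharp two-sided asymptotic for $R$ conditional on the environment, then an averaging over the environment that exploits the Sparre--Andersen / Wiener--Hopf machinery for centered random walks conditioned to stay above their past minimum (as developed in \cite{AGKV}), and finally an explicit integration identifying $K_0$ in terms of the joint law of $\zeta$ and $\theta_1$. The three tail assumptions \eqref{c0}--\eqref{lambda} together with the negative-moment condition \eqref{Aff} enter precisely at the step of controlling the atypical environments that would otherwise spoil the uniform Taylor expansion of $a_{k,m}$ or the conditional approximation $X_k\approx e^{S_k}$.
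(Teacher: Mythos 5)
Your skeleton matches the paper's: the representation \eqref{repZ}, the reduction to the tail of $R=\sum_{k=0}^{T-1}X_k\theta_{1,k}$ (the paper's $S_{\rm T}^{(1)}$), and the closing Tauberian step are all exactly what the paper does. The genuine gap is at the step you yourself flag as ``the main obstacle'': you never establish $\mathbb{P}[R>r]\sim K/\log r$, and the route you sketch for it --- a quantitative version of the approximation $X_k\approx e^{S_k}$ plus Sparre--Andersen/Wiener--Hopf analysis of the maximum of the associated walk --- amounts to reproving Afanasyev's theorem from scratch in a weighted form. The paper does not do this. It quotes the known result \eqref{AsTotal1}, $\mathbb{P}[S_{\rm T}>x]\sim K_0/\log x$ for the \emph{unweighted} total progeny $S_{\rm T}=\sum_{k<T}X_k$, and Lemma \ref{L_estJ} transfers the tail to $S_{\rm T}^{(1)}$ by pure sandwiching: on the event $\{T\le(\log x)^{2+\alpha}\}\cap\{x^{-\varepsilon}\le\theta_{1,k}\le x^{\varepsilon}\mbox{ for all }k\le T\}$, whose complement has probability $o(1/\log x)$ by \eqref{Tx} and a union bound over $(\log x)^{2+\alpha}$ generations using \eqref{c0}--\eqref{As_theta}, one has $x^{-\varepsilon}S_{\rm T}\le S_{\rm T}^{(1)}\le x^{\varepsilon}S_{\rm T}$, and since $\log(x^{1\pm\varepsilon})=(1\pm\varepsilon)\log x$ the logarithmic tail passes through with the \emph{same} constant $K_0$. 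This is also why your plan to identify the constant ``in terms of the joint law of $\zeta$ and $\theta_1$'' is off target: $K_0$ does not depend on the law of $\theta_1$ at all (nor on $m_2$, nor on the constant in $1-Q_n\sim 2/(m_2n)$); every polynomially bounded factor is swallowed by the $1/\log$ tail. The same mechanism is why the paper can afford crude two-sided bounds on $\log f_k(1,Q_{n-k})$ (with \eqref{lambda} controlling the second-order term) rather than the uniform multiplicative $1+o(1)$ Taylor expansion you propose.

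A second, smaller point: your cutoff $T\le n^{1-\varepsilon}$ is too generous for the union bound over generations. To conclude $\mathbb{P}[\max_{k\le T}\theta_{1,k}>n^{\varepsilon}]=o(1/\log n)$ from the marginal tails you would then need polynomial decay of $\mathbb{P}[\theta_1>x]$, whereas the hypotheses only give $o((\log x)^{-3-\alpha})$. The cutoff must be taken at $T\le(\log n)^{2+\alpha}$, still admissible by \eqref{koz}, and exactly calibrated to the exponent $3+\alpha$ in \eqref{c0}--\eqref{lambda}; this is the reason those conditions are stated with logarithmic rather than polynomial tails.
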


Before turning to the proof, we make some comments on the conditions and statement of
this theorem. 
\begin{equation}
\mbox{\parbox{10cm}{{\it Notation:}  we will often use the
abbreviations $x_a:=(\log x)^{2+a}$ and $n_a:=(\log n)^{2+a}$.}}  \label{abr}
\end{equation}%
Conditions (\ref{c0}), (\ref{As_theta}), and (\ref{lambda}) are needed for
the following properties to hold for any fixed $\varepsilon >0$, recall notation agreements \eqref{abrk} and \eqref{abr},
\begin{align}
\mathbb{P}\left[ \min_{0\leq k\leq x_{\alpha }}\theta _{1,k}<x^{-\varepsilon
}\right] & =o\left( \frac{1}{\log x}\right) ,\ x\rightarrow \infty ,
\label{oo} \\
\mathbb{P}\left[ \max_{0\leq k\leq x_{\alpha }}\theta _{1,k}>x^{\varepsilon
}\right] & =o\left( \frac{1}{\log x}\right) ,\ x\rightarrow \infty ,
\label{ooo} \\
\mathbb{P}\left[ \max_{0\leq k\leq x_{\alpha }}(\theta _{2,k}/\theta
_{1,k})>x^{\varepsilon }\right] & =o\left( \frac{1}{\log x}\right) ,\
x\rightarrow \infty .  \label{ooO}
\end{align}%
Each of them is proven via an intermediate step like 
\begin{equation*}
\mathbb{P}\left[ \min_{0\leq k\leq x_{\alpha }}\theta _{1,k}<x^{-\varepsilon
}\right] \leq x_{\alpha }\mathbb{P}\left( \theta _{1}<x^{-\varepsilon
}\right) 
\end{equation*}%
relying on the IID assumption for consecutive environmental states. The
constant $K_{0}$ in the statement of Theorem \ref{L_general}  is the same as in the
asymptotic formula from \cite{Af99} concerning the total number $S_{\rm T}$ of
particles of type 1 ever appeared in the process: 
\begin{equation}
\mathbb{P}\left[ S_{\rm T}>x\right] \sim \frac{K_{0}}{\log x},\ x\rightarrow
\infty.  \label{AsTotal1}
\end{equation}%
This constant has a complicated nature and is not
further explained here. It is necessary to mention that the representation (%
\ref{AsTotal1}) has been proved in \cite{Af99} under conditions %
\eqref{ReCritical}, \eqref{ReC}, and \eqref{Aff} only for the case when the
probability generating functions $f_{n}(s,1)$ are linear-fractional with
probability 1. However, the latter restriction is easily removed using the
results established later on for the general case in \cite{GK00} and \cite%
{AGKV}.

Our proof of Theorem \ref{L_general} uses the next lemma.
\begin{lemma}
\label{L_estJ} Consider conditional moments of the entity $W_{n}$ defined
at the beginning of Section \ref{Sfix}: 
\begin{equation*}
S_{n}^{(i)}:=\sum_{k=0}^{n-1}X_{k}\theta _{i,k},\ i=1,2.
\end{equation*}%
Under conditions \eqref{ReCritical}, \eqref{c0}, \eqref{As_theta}, and %
\eqref{lambda}, 
\begin{equation*}
\mathbb{P}\left[ S_{\rm T}^{(1)}>x\right] \sim \frac{K_{0}}{\log x},\quad
x\rightarrow \infty.
\end{equation*}%
For any fixed $\epsilon >0$, in the notation from \eqref{abr}, 
\begin{equation*}
\mathbb{P}\left[ S_{\rm T}^{(2)}>n^{\varepsilon }S_{\rm T}^{(1)};T\leq n_{\alpha
}\right] =o\left( \frac{1}{\log n}\right) ,\ n\rightarrow \infty .
\end{equation*}
\end{lemma}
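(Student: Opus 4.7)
The first assertion is obtained by bootstrapping from the known tail \eqref{AsTotal1} for the total progeny $S_T$: the point is that on most of the sample space the weights $\theta_{1,k}$ accumulated over the lifetime of $X$ fall in the window $[x^{-\varepsilon},x^{\varepsilon}]$, so $S_T^{(1)}$ and $S_T$ agree on the logarithmic scale relevant to \eqref{AsTotal1}. The plan is to split on $\{T\le x_\alpha\}$ and its complement: by \eqref{koz} the complement costs at most $O((\log x)^{-1-\alpha/2})=o(1/\log x)$ and may be discarded. On $\{T\le x_\alpha\}$ the deterministic sandwich
$$\Bigl(\min_{0\le k\le x_\alpha}\theta_{1,k}\Bigr)S_T\le S_T^{(1)}\le \Bigl(\max_{0\le k\le x_\alpha}\theta_{1,k}\Bigr)S_T$$
holds (the min over $[0,T-1]$ dominates the min over $[0,x_\alpha]$ and dually for the max), and \eqref{oo}--\eqref{ooo} let us peel off the events where the extreme weights fall outside $[x^{-\varepsilon},x^{\varepsilon}]$ at a cost of $o(1/\log x)$. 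This reduces the problem to estimating $\mathbb{P}[S_T>x^{1\mp\varepsilon}]$; applying \eqref{AsTotal1} at $x^{1\mp\varepsilon}$ yields
$$\frac{K_0\,(1+o(1))}{(1+\varepsilon)\log x}\le \mathbb{P}[S_T^{(1)}>x]\le \frac{K_0\,(1+o(1))}{(1-\varepsilon)\log x},$$
and sending $\varepsilon\downarrow 0$ delivers the first assertion.

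For the second assertion the argument is essentially one line. The elementary inequality
$$S_T^{(2)}=\sum_{k=0}^{T-1}X_k\theta_{1,k}\cdot\frac{\theta_{2,k}}{\theta_{1,k}}\le \Bigl(\max_{0\le k\le T-1}\frac{\theta_{2,k}}{\theta_{1,k}}\Bigr) S_T^{(1)}$$
shows that on $\{T\le n_\alpha\}$ the event $\{S_T^{(2)}>n^{\varepsilon}S_T^{(1)}\}$ is included in $\{\max_{0\le k\le n_\alpha}(\theta_{2,k}/\theta_{1,k})>n^{\varepsilon}\}$, whose probability is $o(1/\log n)$ by \eqref{ooO} taken at $x=n$. (A possible zero denominator is a harmless cosmetic issue: one can either adopt the convention $\theta_{2,k}/\theta_{1,k}=\infty$ when $\theta_{1,k}=0<\theta_{2,k}$, or rewrite the inclusion directly in terms of $\theta_{2,k}>n^{\varepsilon}\theta_{1,k}$ for some $k\le T-1$.)

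The delicate point is really confined to the first part: one has to arrange the extreme weights over the deterministic range $[0,x_\alpha]$ rather than over the random range $[0,T-1]$, so that \eqref{oo} and \eqref{ooo} (which follow from the IID structure by a crude union bound) can be invoked without entangling with the branching dynamics that govern $T$ and $S_T$. Once this decoupling is in place, the proof is a routine combination of the three ingredients \eqref{AsTotal1}, \eqref{oo}--\eqref{ooo}, and \eqref{koz}.
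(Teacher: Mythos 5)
Your proof is correct and follows essentially the same route as the paper's: both arguments sandwich $S_{\rm T}^{(1)}$ between $\bigl(\min_k\theta_{1,k}\bigr)S_{\rm T}$ and $\bigl(\max_k\theta_{1,k}\bigr)S_{\rm T}$ on $\{T\leq x_{\alpha}\}$, discard the exceptional events via \eqref{koz} and \eqref{oo}--\eqref{ooo}, apply \eqref{AsTotal1} at $x^{1\mp\varepsilon}$, and let $\varepsilon\downarrow 0$ after taking $\liminf$/$\limsup$. The second claim is handled by the same one-line inclusion into the event of \eqref{ooO} that the paper uses.
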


\begin{proof}.
 For any fixed $\epsilon >0$, 
\begin{eqnarray*}
\mathbb{P}\left[ S_{\rm T}^{(1)}>x\right] &\geq &\mathbb{P}\left[
S_{\rm T}^{(1)}>x;T\leq x_{\alpha };\min_{0\leq k\leq T}\theta
_{1,k}>x^{-\varepsilon }\right] \\
&\geq &\mathbb{P}\left[ S_{\rm T}>x^{1+\varepsilon }\right] -\mathbb{P}\left[
T>x_{\alpha }\right] -\mathbb{P}\left[ \min_{0\leq k\leq x_{\alpha }}\theta
_{1,k}\leq x^{-\varepsilon }\right] .
\end{eqnarray*}%
Notice that according to \eqref{koz} 
\begin{equation}
\mathbb{P}\left[ T>(\log x)^{2+\varepsilon }\right] =o\left( \frac{1}{\log x}%
\right) ,\ x\rightarrow \infty ,\mbox{ for any fixed $\varepsilon>0$}.
\label{Tx}
\end{equation}%
Thus, using (\ref{oo}) and (\ref{AsTotal1}) we get 
\begin{equation*}
\liminf_{x\rightarrow \infty }\left\{ \log x\cdot \mathbb{P}\left[
S_{\rm T}^{(1)}>x\right] \right\} \geq \liminf_{x\rightarrow \infty }\left\{
\log x\cdot \mathbb{P}\left[ S_{\rm T}>x^{1+\varepsilon }\right] \right\} \geq
K_{0}/(1+\epsilon ).
\end{equation*}%
To obtain a similar estimate from above we write, recalling \eqref{abr},%
\begin{eqnarray*}
\mathbb{P}\left[ S_{\rm T}^{(1)}>x\right] &\leq &\mathbb{P}\left[
S_{\rm T}^{(1)}>x;T\leq x_{\alpha };\max_{0\leq k\leq T}\theta _{1,k}\leq
x^{\varepsilon }\right] \\
&+&\mathbb{P}\left[ T>x_{\alpha }\right] +\mathbb{P}\left[ T\leq x_{\alpha
};\max_{0\leq k\leq T}\theta _{1,k}>x^{\varepsilon }\right] \\
&\leq &\mathbb{P}\left[ S_{\rm T}>x^{1-\varepsilon }\right] +\mathbb{P}\left[
T>x_{\alpha }\right] +\mathbb{P}\left[ \max_{0\leq k\leq x_{\alpha }}\theta
_{1,k}>x^{\varepsilon }\right],
\end{eqnarray*}%
which together with \eqref{ooo}, (\ref{AsTotal1}), and \eqref{Tx} yields 
\begin{eqnarray*}
\limsup_{x\rightarrow \infty }\left\{ \log x\cdot \mathbb{P}\left[
S_{\rm T}^{(1)}>x\right] \right\} &\leq &\limsup_{x\rightarrow \infty }\left\{
\log x\cdot \mathbb{P}\left[ S_{\rm T}>x^{1-\varepsilon }\right] \right\} \\
&\leq &K_{0}/(1-\epsilon ).
\end{eqnarray*}%
Finally, according to \eqref{ooO} 
\begin{eqnarray*}
\mathbb{P}\left[ S_{\rm T}^{(2)}>n^{\varepsilon }S_{\rm T}^{(1)};T\leq n_{\alpha
}\right] &\leq &\mathbb{P}\left[ \max_{1\leq k\leq T}(\theta _{2,k}/\theta
_{1,k})>n^{\varepsilon };T\leq n_{\alpha }\right] \\
&=&o\left( \frac{1}{\log n}\right) ,\ n\rightarrow \infty .
\end{eqnarray*}
 
\end{proof}
\begin{proof} {\sc  of Theorem \ref{L_general}.} We will show that 
\begin{align}
\limsup_{n\rightarrow \infty }\left\{ \log n\cdot \mathbb{P}%
[Z_{n}>0]\right\} \leq K_{0} \leq \liminf_{n\rightarrow \infty }\left\{ \log n\cdot \mathbb{P}%
[Z_{n}>0]\right\}  \label{Up}
\end{align}%
using a counterpart of %
\eqref{repa} 
\begin{align}
\mathbb{P}\left[ Z_{n}>0\right] & =\mathbb{E}\left[ 1-\prod%
\limits_{k=0}^{n-1}Q_{n-k}^{Y_{k}}\right] =\mathbb{E}\left[
1-\prod\limits_{k=0}^{n-1}f_{k}^{X_{k}}(1,Q_{n-k})\right]  \notag
\label{repZ} \\
& =\mathbb{E}\left[ 1-\exp \left\{ \sum_{k=0}^{n-1}X_{k}\log
f_{k}(1,Q_{n-k})\right\} \right] ,
\end{align}%
and Lemma \ref{L_estJ}.

First we prove the second inequality in  \eqref{Up}. It follows from \eqref{repZ} and the
monotonicity of $Q_{n}$ that for any fixed $\epsilon \in (0,1)$ 
\begin{equation*}
\mathbb{P}\left[ Z_{n}>0\right] \geq \mathbb{E}\left[ 1-\exp \left\{
\sum_{k=0}^{T-1}X_{k}\log f_{k}(1,Q_{n})\right\} ;S_{\rm T}^{(2)}\leq
n^{\varepsilon }S_{\rm T}^{(1)},T\leq n_{\alpha }\right] .
\end{equation*}%
Recall that $\log (1-x)\leq -x$ and 
\begin{equation*}
f(1,s)\leq 1+\theta _{1}\left( s-1\right) +{\frac{\theta _{2}}{2}}\left(
1-s\right) ^{2},
\end{equation*}%
with the latter inequality being valid thanks  to the monotonicity of the second derivative of the generating
function. Therefore, 
\begin{equation*}
\log f(1,s)\leq -\theta _{1}\left( 1-s\right) +(\theta _{2}/2)\left(
1-s\right) ^{2}
\end{equation*}%
and 
\begin{eqnarray*}
\sum_{k=0}^{T-1}X_{k}\log f_{k}(1,Q_{n}) &\leq &-\left( 1-Q_{n}\right)
\sum_{k=0}^{T-1}X_{k}\theta _{1,k}+\frac{\left( 1-Q_{n}\right) ^{2}}{2}%
\sum_{k=0}^{T-1}X_{k}\theta _{2,k} \\
&\leq &-c_{1}n^{-1}\sum_{k=0}^{T-1}X_{k}\theta
_{1,k}+c_{2}n^{-2}\sum_{k=0}^{T-1}X_{k}\theta _{2,k},
\end{eqnarray*}%
where the last inequality is due to \eqref{ProbExt}.
It follows that given $S_{\rm T}^{(2)}\leq n^{\varepsilon }S_{\rm T}^{(1)}$,
\begin{equation*}
\sum_{k=0}^{T-1}X_{k}\log f_{k}(1,Q_{n})\leq -cn^{-1}S_{\rm T}^{(1)}
\end{equation*}%
for sufficiently large $n$. As a result, we see that for large $n$%
\begin{align*}
\mathbb{P}\left[ Z_{n}>0\right] & \geq \mathbb{E}\left[
1-e^{-cn^{-1}S_{\rm T}^{(1)}};S_{\rm T}^{(2)}\leq n^{\varepsilon }S_{\rm T}^{(1)},T\leq
n_{\alpha }\right] \\
& \geq \mathbb{E}\left[ 1-e^{-cn^{-1}S_{\rm T}^{(1)}}\right] -\mathbb{P}\left[
T>n_{\alpha }\right] -\mathbb{P}\left[ S_{\rm T}^{(2)}>n^{\varepsilon
}S_{\rm T}^{(1)};T\leq n_{\alpha }\right].
\end{align*}
Now, to finish the proof of the second inequality in  \eqref{Up} it remains to use \eqref{Tx}, Lemma \ref{L_estJ}, and 
\begin{equation*}
\mathbb{E}\left[ 1-e^{-\lambda S_{\rm T}^{(1)}}\right] \sim \frac{K_{0}}{\log
(1/\lambda )},\ \lambda \rightarrow 0,
\end{equation*}%
which again due to Lemma \ref{L_estJ} follows from the Tauberian theorem \cite[Ch. XIII.5, Th.4]{Fe2} applied to the right hand side of 
\begin{equation*}
\lambda ^{-1}\mathbb{E}\left[ 1-e^{-\lambda S_{\rm T}^{(1)}}\right]
=\int_{0}^{\infty }\mathbb{P}\big[ S_{\rm T}^{(1)}>x\big] e^{-\lambda x}dx.
\end{equation*}

Next, we verify the first inequality in \eqref{Up}.
From the estimates $\log
(1-x)\geq -2x$, valid for $x\in \left( 0,1/2\right)$, and
$
f(1,s)\geq 1+\theta _{1}\left( s-1\right)$, we conclude
that for all sufficiently large $n$
\begin{align*}
 \mathbb{E}&\left[ 1-\exp \left\{ \sum_{k=0}^{T-1}X_{k}\log
f_{k}(1,Q_{n-T})\right\} ;T\leq n_{\alpha };\max_{0\leq k\leq T}\theta
_{1,k}\leq n^{\varepsilon }\right] \\
&\leq \mathbb{E}\left[ 1-\exp \left\{ \sum_{k=0}^{T-1}X_{k}\log \Big(1-{c\theta_{1,k}\over n}\Big)\right\} ;T\leq n_{\alpha };\max_{0\leq k\leq T}\theta
_{1,k}\leq n^{\varepsilon }\right] \\
&\leq \mathbb{E}\left[ 1-\exp \left\{ -2cn^{-1}\sum_{k=0}^{T-1}X_{k}\theta
_{1,k}\right\} ;T\leq n_{\alpha };\max_{0\leq k\leq T}\theta _{1,k}\leq
n^{\varepsilon }\right] \\
&\leq \mathbb{E}\left[ 1-e^{-2cn^{\varepsilon -1}S_{\rm T}^{(1)}}\right].
\end{align*}%
Thus,
\begin{align*}
\mathbb{P}\left[ Z_{n}>0\right]&\leq \mathbb{E}\left[ 1-\exp \left\{
\sum_{k=0}^{T-1}X_{k}\log f_{k}(1,Q_{n-T})\right\} \right]\\
& \leq \mathbb{E}\left[ 1-e^{-2cn^{\varepsilon
-1}S_{\rm T}^{(1)}}\right] +\mathbb{P}\left[ T>n_{\alpha }\right] +\mathbb{P}%
\left[ \max_{0\leq k\leq n_{\alpha }}\theta _{1,k}>n^{\varepsilon }\right],
\end{align*}%
and \eqref{Up} follows due to \eqref{ooo} and \eqref{Tx}.
 
\end{proof}

\section{The subcritical case with an IID environment\label{S_S}}

We continue studying BGW-processes in IID environment but now assume 
\begin{equation}
\mathbb{E}\left[ \zeta \right]<0,\quad \mathrm{Var}\left[ \zeta \right] \in
\left( 0,\infty \right)  \label{ReS}
\end{equation}%
instead of \eqref{ReCritical}. Results rely upon a theorem from \cite{V10}
giving the asymptotics for $\mathbb{P}[W_{\rm T}>x]$ as $x\rightarrow \infty $.
It requires an important technical assumption viz. the existence of a constant $%
\kappa $ such that 
\begin{equation}
\mathbb{E}\left[ e^{\kappa \zeta }\right] =\mathbb{E}\left[ \mu _{1}^{\kappa
}\right] =1,\quad 0<\kappa <\infty .  \label{defkap}
\end{equation}%
%
%
%
%
%
%
%
%
%
%
%
%
%
%
If, in addition, for some $\delta >0$ 
\begin{equation}
0<\mathbb{E}\left[\xi _{2}^{\kappa +\delta }\right] <\infty ,~~\mathbb{E}%
\left[ \theta _{1}^{\kappa }\right] <\infty ,  \label{cond}
\end{equation}%
and either
\begin{equation}
\kappa >1,\ \mathbb{E}\left[ \mathbf{|}\xi _{1}|^{\kappa }\right] <\infty ,
\label{condg0}
\end{equation}%
or
\begin{equation}
0<\kappa \leq 1,\ \mathbb{E}\left[ |\mu _{2}-\mu _{1}^{2}|^{\kappa }+|\theta
_{2}-\theta _{1}^{2}|^{\kappa }\right] <\infty ,  \label{condg}
\end{equation}%
then, according to \cite{V10}, there exists a constant $C_{\kappa }\in
\left( 0,\infty \right) $ such that

\begin{equation}
\mathbb{P}\left[ W_{\rm T}>x\right] \sim C_{\kappa }x^{-\kappa } ,\;x\rightarrow
\infty .  \label{AsTotal1su}
\end{equation}
It is also known \cite{AGKV2,ABKV1,ABKV2} that under (\ref{ReS}) and (\ref{defkap}) 
\begin{equation}
\mathbb{P}\left[ X_{n}>0\right] =\mathbb{P}\left[ T>n\right] =o\left(
A^{n}\right) \mbox { for some constant
$A\in \left( 0,1\right) $}.  \label{3}
\end{equation}%

\begin{theorem}\hspace{-1.5mm}.
\label{T_Dsubcrit} If conditions \eqref{ReS}, \eqref{defkap}, \eqref{cond}
and either \eqref{condg0} or \eqref{condg} hold, then 
\begin{equation}
\mathbb{P}\left[ Z_{n}>0\right]
\sim K_{\kappa }\cdot q_{\kappa }(n),\quad n\rightarrow \infty ,
\label{Asym222}
\end{equation}%
for some positive constant $K_{\kappa }$, given by \eqref{ReprK} below,
where 
\begin{equation}
q_{\kappa }(n)=\left\{ 
\begin{array}{ll}
n^{-\kappa }, & \mbox{if }\kappa <1, \\ 
n^{-1}\log n, & \mbox{if }\kappa =1, \\ 
n^{-1}, & \mbox{if }\kappa >1.%
\end{array}%
\right.  \label{qka}
\end{equation}
\end{theorem}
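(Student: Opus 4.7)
The plan is to build on the representation
\begin{equation*}
\mathbb{P}[Z_n > 0] = \mathbb{E}\Big[1 - \prod_{k=0}^{T-1} Q_{n-k}^{Y_k}\Big]
\end{equation*}
from \eqref{repZ} (using $Y_k=0$ for $k \geq T$), together with the heavy tail $\mathbb{P}[W_{\rm T} > x] \sim C_\kappa x^{-\kappa}$ from \eqref{AsTotal1su} and the exponential bound $\mathbb{P}[T > n] = o(A^n)$ from \eqref{3}. The first move is to choose a logarithmic cutoff $N_n := \lceil c \log n \rceil$, with $c$ large enough that $\mathbb{P}[T > N_n] = o(q_\kappa(n))$ in every regime. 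On the event $\{T \leq N_n\}$ the monotonicity of $Q_m$ in $m$ yields the sandwich
\begin{equation*}
Q_n^{W_{\rm T}} \leq \prod_{k=0}^{T-1} Q_{n-k}^{Y_k} \leq Q_{n-N_n}^{W_{\rm T}},
\end{equation*}
and since $1 - Q_m \sim 2/(m_2 m)$ by \eqref{ProbExt}, both ends yield the same leading-order contribution.

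Using $\log Q_m \sim -(1-Q_m)$, the problem reduces to computing the asymptotics of $\mathbb{E}[1 - e^{-\lambda_n W_{\rm T}}]$ as $n \to \infty$, with $\lambda_n := 2/(m_2 n)$. To do this I would write
\begin{equation*}
\mathbb{E}[1 - e^{-\lambda W_{\rm T}}] = \lambda \int_0^\infty e^{-\lambda x} \mathbb{P}[W_{\rm T} > x]\, dx
\end{equation*}
and apply Tauberian/Abelian estimates based on \eqref{AsTotal1su}. For $\kappa < 1$ the Tauberian theorem \cite[Ch.~XIII.5, Th.~4]{Fe2} delivers $\mathbb{E}[1 - e^{-\lambda W_{\rm T}}] \sim C_\kappa \Gamma(1-\kappa)\, \lambda^\kappa$. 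For $\kappa > 1$ the tail bound gives $\mathbb{E}[W_{\rm T}] < \infty$, so dominated convergence yields $\sim \lambda \mathbb{E}[W_{\rm T}]$. For $\kappa = 1$, splitting the integral at $x = 1/\lambda$ produces $\sim C_1 \lambda \log(1/\lambda)$. Substituting $\lambda = \lambda_n$ and matching delivers $q_\kappa(n)$ together with the explicit expression for $K_\kappa$ in each regime.

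The main technical obstacle is the borderline case $\kappa = 1$, where $\mathbb{E}[W_{\rm T}]$ is divergent and the logarithmic correction must be tracked carefully through the sandwich argument, so that the effective rate is $\lambda_n \log(1/\lambda_n) \sim (m_2 n)^{-1}\log n$ rather than $\lambda_n$ alone. A secondary point is to verify that the approximation $1 - \prod Q_{n-k}^{Y_k} \approx 1 - e^{-\lambda_n W_{\rm T}}$ remains valid when $W_{\rm T}$ is atypically large: on the event where $W_{\rm T}$ is of order $1/\lambda_n$ or larger, both expressions are close to $1$, so the contribution of this part is itself governed directly by the tail \eqref{AsTotal1su}. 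Finally, the exponential smallness of $\mathbb{P}[T > N_n]$ against the merely polynomial decay of $q_\kappa(n)$ makes the cutoff error genuinely negligible in every regime, so the only substantive analytic content lies in the Tauberian step.
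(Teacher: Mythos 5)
Your proposal follows essentially the same route as the paper's proof: the representation $\mathbb{P}[Z_n>0]=\mathbb{E}\big[1-\prod_k Q_{n-k}^{Y_k}\big]$, a logarithmic cutoff on $T$ justified by the exponential bound \eqref{3}, a monotonicity sandwich reducing the problem to $\mathbb{E}\big[1-e^{W_{\rm T}\log Q_n}\big]$, and a Tauberian/Abelian step based on \eqref{AsTotal1su} and \eqref{ProbExt}. The only blemish is that your sandwich has the inequalities reversed (since $Q_m$ is nondecreasing in $m$, the correct bounds are $Q_{n-N_n}^{W_{\rm T}}\leq\prod_k Q_{n-k}^{Y_k}\leq Q_n^{W_{\rm T}}$), but this does not affect the squeeze argument.
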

\begin{proof}. Referring to (\ref{3}), put $B_n:={2\log n\over \log(A^{-1})}$ and notice that 
\[\mathbb{P}\left[
T>B_n\right]=o(n^{-2}),\quad n\to\infty.\]
From the first equality in \eqref{repZ} and
the evident inequality $Q_{n-k}\leq Q_{n}$ we obtain for $n\geq B_n$ 
\begin{eqnarray*}
\mathbb{P}\left[ Z_{n}>0\right] &\geq &\mathbb{E}\left[ 1-\prod%
\limits_{k=0}^{n-1}Q_{n}^{Y_{k}};T\leq B_n\right] \\
&=&\mathbb{E}\left[ 1-e^{W_{\rm T}\log Q_{n}};T\leq B_n\right] \geq \mathbb{E}\left[ 1-e^{W_{\rm T}\log Q_{n}}\right] -\mathbb{P}\left[
T>B_n\right].
\end{eqnarray*}%
On the other hand, we
have a similar upper bound 
\begin{eqnarray*}
\mathbb{P}\left[ Z_{n}>0\right] &\leq &\mathbb{E}\left[ 1-Q_{n-T}^{W_{\rm T}};T%
\leq B_n\right] +\mathbb{P}\left[T>B_n\right] \\
&\leq &\mathbb{E}\left[ 1-e^{W_{\rm T}\log Q_{n-N\log n}}\right] +\mathbb{P}%
\left[ T>B_n\right] .
\end{eqnarray*}

It remains to observe that due to \eqref{ProbExt} and (\ref{AsTotal1su}), the
same Tauberian theorem \cite[Ch. XIII.5, Th.4]{Fe2} applied to the right hand
side of
\begin{equation*}
\lambda ^{-1}\mathbb{E}\left[ 1-e^{-\lambda W_{\rm T}}\right] =\int_{0}^{\infty }%
\mathbb{P}\left[W_{\rm T}>x\right] e^{-\lambda x}dx
\end{equation*}%
yields 
\begin{equation*}
\mathbb{E}\left[ 1-e^{W_{\rm T}\log Q_{n}}\right] \sim K_{\kappa }\cdot
q_{\kappa }\left( n\right) ,\quad n\rightarrow \infty ,
\end{equation*}%
with%
\begin{equation}
K_{\kappa }=\left\{ 
\begin{array}{ccc}
\Gamma \left( 1-\kappa \right) C_{\kappa }\left( \frac{2}{m_{2}}\right)
^{\kappa }, & \text{if } & \kappa <1, \\ 
&  &  \\ 
\frac{2}{m_{2}}C_{1}, & \text{if} & \kappa =1, \\ 
&  &  \\ 
\frac{2}{m_{2}}\int_{0}^{\infty }\mathbb{P}\left( W_{\rm T}>x\right) dx, & \text{%
if} & \kappa >1.%
\end{array}%
\right.   \label{ReprK}
\end{equation}%
\end{proof}

\section{The critical case with a Markovian environment \label{S_CM}}
As compared to the IID case, Markovian environments require extra
conditions on the underlying Markov chain. First we assume that the two-type
critical process $(X_{n},Z_{n})$ evolves in a stationary
Markovian random environment as defined in Section \ref{Sme}. Besides, we
suppose the validity of \eqref{fub} and that for some $\rho >0$ 
\begin{equation}
\mathbb{P}\left[ \tau >x\right] =o\left(x^{-1}(\log x)^{-1-\rho }\right)
,~x\rightarrow \infty .  \label{MM1}
\end{equation}%
This implies that $a:=\mathbb{E}[\tau]<\infty $ and due to Lemma
\ref{pro} conditions $\mathbb{E}[\hat \zeta]=0$ and $%
\mathbb{E}[\zeta]=0$ become equivalent.
Moreover, under condition (\ref{MM1}) the sequence of regeneration times %
\eqref{reg} satisfies 
\begin{equation}
\mathbb{P}\left[ \ \left\vert k^{-1}\tau _{k}-a\right\vert >\varepsilon
\right] =o\left( (\log k)^{-1-\rho }\right) ,\ k\rightarrow \infty ,
\label{reg1}
\end{equation}%
for an arbitrarily small $\varepsilon >0$, cf. \cite{HR67}.

\begin{theorem}\hspace{-1.5mm}.
\label{L_Markov} Assume \eqref{Crit}, \eqref{fub}, \eqref{MM1}, and 
\begin{equation*}
\mathbb{E}[\zeta]=0,\quad \mathrm{Var}[\hat \zeta] \in \left( 0,\infty \right) ,  \label{vars}
\end{equation*}%
\begin{equation*}
\mathbb{E}\left[\hat \mu _{2}\hat  \mu _{1} ^{-2}\left(
1+\max \left( 0,\log \hat \mu _{1}\right) \right) \right] <\infty ,\quad 
\mathbb{E}[ \hat  \mu _{1} ^{-1}]<\infty .
\end{equation*}%
Further, for some positive $\alpha $ let
\begin{align*}
\mathbb{P}\left[ \hat \theta _{1}<1/x\right] & =o\left( (\log
x)^{-3-\alpha }\right) ,\ x\rightarrow \infty ,  
 \\
\mathbb{P}\left[ \hat \theta _{1}>x\right] & =o\left( (\log x)^{-3-\alpha
}\right) ,\ x\rightarrow \infty ,  
\\
\mathbb{P}\left[ \hat \theta _{2}>x\hat \theta _{1}\right] & =o\left(
(\log x)^{-3-\alpha }\right) ,\ x\rightarrow \infty .  
\end{align*}%
Then $\mathbb{P}\left[X_{n}>0\right]=O(n^{-1/2}) $ and there exists a constant $\hat K_0>0$ such that 
\begin{equation*}
\mathbb{P}\left[Z_{n}>0\right]
\sim \frac{\hat K_0}{\log n},\ n\rightarrow \infty .  
\end{equation*}
\end{theorem}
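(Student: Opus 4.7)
The plan is to mirror the proof of Theorem \ref{L_general} in the Markov setting, using the representation
\[
\mathbb{P}[Z_n > 0] = \mathbb{E}\Big[1 - \exp\Big\{\sum_{k=0}^{n-1} X_k \log f_k(1, Q_{n-k})\Big\}\Big],
\]
in which $Q_n$ is the same deterministic extinction probability of a type-$2$ BGW as in Section \ref{Sfix}, since $h$ does not depend on the environment. Consequently $1 - Q_n \sim 2/(m_2 n)$ and all sandwich bounds on $\log f_k(1, Q_{n-k})$ used in Theorem \ref{L_general} carry over verbatim. This reduces the problem to (i) a Tauberian tail asymptotic for $S_T^{(1)} = \sum_{k=0}^{T-1} X_k \theta_{1,k}$, and (ii) two auxiliary controls, now in a Markovian setting: the $\min/\max$ bounds on $\theta_{1,k}$ and $\theta_{2,k}/\theta_{1,k}$ over $k \le n_\alpha$, and the bound $\mathbb{P}[T > n_\alpha] = o(1/\log n)$.

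For (ii), the $\min/\max$ estimates reduce, by stationarity of $\{e_n\}$, to union bounds over $n_\alpha$ marginals distributed as $\theta_1$; the hypotheses on $\hat\theta_1$ and $\hat\theta_2$ imply the required marginal bounds on $\theta_1$ and $\theta_2/\theta_1$. For the tail of $T$, I would prove the stronger bound $\mathbb{P}[X_n > 0] = O(n^{-1/2})$ by observing $\{T > m\} = \{\hat X_{\nu(m)} > 0\}$ with $\nu(m) := \max\{k : \tau_k \le m\}$; the LLN \eqref{reg1} lets one replace $\nu(m)$ by a deterministic multiple of $m$ at negligible cost, and then \eqref{koz} applied to the embedded critical process $\hat X_n$ (which satisfies the hypotheses of \eqref{koz} by the assumptions on $\hat\zeta$) yields the required $O(m^{-1/2})$.

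For (i), Lemma \ref{L_estJ} applied to the embedded process gives $\mathbb{P}[\hat S_{\hat T}^{(1)} > x] \sim \hat K_0 / \log x$, where $\hat K_0$ is the constant of the embedded analog of \eqref{AsTotal1}. To transfer this to $\mathbb{P}[S_T^{(1)} > x]$, I would decompose $S_T^{(1)}$ into regeneration blocks: the contribution of block $j$, namely $\sum_{k=\tau_j}^{\tau_{j+1}-1} X_k \theta_{1,k}$, has conditional expectation $\hat X_j \hat\theta_{1,j}$ given the environment on the block and $\hat X_j = X_{\tau_j}$, directly from the definition of $\hat\theta_{1,j}$ in Section \ref{Sme}. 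Summing over $j < \hat T$ and treating the incomplete block spanning the extinction time $T$ via \eqref{MM1}, one should obtain $S_T^{(1)} \approx \hat S_{\hat T}^{(1)}$ in a tail-preserving sense; the Tauberian theorem then yields $\mathbb{E}[1 - e^{-\lambda S_T^{(1)}}] \sim \hat K_0/\log(1/\lambda)$.

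The main obstacle is exactly this last transfer step: the $1/\log x$ tail is so slowly varying that matching conditional expectations per block is not enough — one must control within-block fluctuations of $X_k$ and $\theta_{1,k}$ tightly enough that the rare events driving the tail of $S_T^{(1)}$ correspond to the rare events driving the tail of $\hat S_{\hat T}^{(1)}$. The hypothesis \eqref{MM1} on $\tau$ is calibrated to make this work, but the bookkeeping — uniformly over the range of $\hat T$ relevant for the tail, including the incomplete terminal block — is the technical heart of the argument.
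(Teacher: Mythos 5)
There is a genuine gap, and it sits exactly where you locate it yourself: the transfer of the $1/\log x$ tail from the embedded quantity $\hat S_{\hat T}^{(1)}$ to $S_T^{(1)}=\sum_{k=0}^{T-1}X_k\theta_{1,k}$ is never carried out, and no argument is sketched that would close it. A second, unacknowledged problem is your claim that ``the hypotheses on $\hat\theta_1$ and $\hat\theta_2$ imply the required marginal bounds on $\theta_1$ and $\theta_2/\theta_1$.'' Since $\hat\theta_1=\sum_{k=0}^{\tau-1}\theta_{1,k}\prod_{i=0}^{k-1}\mu_{1,i}\geq\theta_{1,0}$, the event $\{\hat\theta_1<1/x\}$ is \emph{contained} in $\{\theta_{1,0}<1/x\}$, so the hypothesis $\mathbb{P}[\hat\theta_1<1/x]=o((\log x)^{-3-\alpha})$ gives no control whatsoever on $\mathbb{P}[\theta_1<1/x]$ (the inequality goes the wrong way); the ratio condition on $\hat\theta_2/\hat\theta_1$ likewise does not translate into one on $\theta_2/\theta_1$. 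The theorem's hypotheses are deliberately stated for the hatted quantities only, which means a proof that runs the Theorem \ref{L_general} machinery directly on the original process, as you propose, does not have the inputs it needs. (Your union bound over $n_\alpha$ marginals is fine under stationarity alone; it is the marginal tail estimates themselves that are missing.)

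The paper avoids both difficulties by never touching $S_T^{(1)}$ for the original process. It first proves $\mathbb{P}[\hat Z_r>0]\sim\hat K_0/\log r$ (relation \eqref{step1}) by applying the Theorem \ref{L_general} argument to the embedded IID-environment process itself — which is exactly why the hypotheses are on $\hat\theta_1,\hat\theta_2,\hat\mu_1,\hat\mu_2$ — handling the one new complication (the type-$\hat 2$ law $\hat h=h_\tau$ is environment-dependent, so the relevant extinction probabilities are $Q_{\tau_r-\tau_k}$ rather than $Q_{r-k}$) by replacing $Q_{\tau_r-\tau_k}$ with $Q_{2ra}$ on an event of the form $\{\tau_{r^\delta}\leq r^{2\delta},|\tau_r-ra|\leq r^{1-\delta}\}$ controlled via \eqref{MM1} and \eqref{reg1}. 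It then transfers to the original time scale using the monotone sandwich \eqref{2ine}, $\mathbb{P}[\hat X_{N_n+1}+\hat Z_{N_n+1}>0]\leq\mathbb{P}[X_n+Z_n>0]\leq\mathbb{P}[\hat X_{N_n}+\hat Z_{N_n}>0]$ with $N_n=\max\{k:\tau_k\leq n\}$, plus the law of large numbers for $N_n$; the slow variation of $1/\log n$ then absorbs the factor $a=\mathbb{E}[\tau]$, which is why no power of $a$ appears in the answer. This transfer at the level of survival probabilities, rather than at the level of the tail of a total-progeny-type functional, is the idea your proposal is missing; your treatment of $\mathbb{P}[X_n>0]=O(n^{-1/2})$ via \eqref{koz} for the embedded process and the LLN does, however, coincide with the paper's derivation of \eqref{lost} from \eqref{last}.
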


%
%
%
\begin{proof}. 
The statement is derived in two steps: first 
\begin{equation}
\mathbb{P}\left[
\hat Z_{r}>0\right] \sim \frac{\hat K_0}{\log r},\quad r\rightarrow \infty
\label{step1}
\end{equation}%
and then 
\begin{equation}
\mathbb{P}\left[ X_{n}+Z_{n}>0\right]
\sim \frac{\hat K_0}{\log n},\quad n\rightarrow \infty .  \label{step2}
\end{equation}
together with  
\begin{equation}
\mathbb{P}\left[ X_{n}>0\right]=O(n^{-1/2}),\quad n\to\infty. \label{lost}
\end{equation}

Fix $\delta \in \left( 0,1/4\right) $ and write%
\begin{eqnarray*}
\mathbb{P}\left[\hat Z_{r}>0\right] &=&\mathbb{P}\left[\hat  Z_{r}>0;\tau _{r^{\delta }}\leq r^{2\delta },\left\vert \tau _{r}-ra\right\vert
\leq r^{1-\delta }\right] \\
&&+O\left( \mathbb{P}\left[ \tau _{r^{\delta }}>r^{2\delta }\right] \right)
+O\left( \mathbb{P}\left[ \left\vert r^{-1}\tau _{r}-a\right\vert
>r^{-\delta }\right] \right) .
\end{eqnarray*}%
Here the last two terms are treated with the help of $\mathbb{P}\left( \tau
_{r^{\delta }}>r^{2\delta }\right) \leq ar^{-\delta }$ and \eqref{reg1},
while the main term is analyzed by means of ideas from the proof of Theorem %
\ref{L_general}. Letting $\hat Y_{k}$ be the number of type $\hat 2$
daughters produced by $\hat X_{k}$ particles of type $\hat 1$ and
putting $\hat T:=\min \left\{ r: \hat Z_{r}=0\right\} $, we deduce from 
\begin{align*}
\mathbb{P}& \left[ \hat Z_{r}>0;\tau _{r^{\delta }}\leq r^{2\delta
},\left\vert \tau _{r}-ra\right\vert \leq r^{1-\delta }\right]\\
& =\mathbb{E}\left[ 1-\prod\limits_{k=0}^{r-1}Q_{\tau _{r}-\tau
_{k}}^{\hat Y_{k}};\tau _{r^{\delta }}\leq r^{2\delta },\left\vert \tau
_{r}-ra\right\vert \leq r^{1-\delta }\right] ,
\end{align*}%
a lower bound
\begin{align*}
\mathbb{P}& \left[\hat  Z_{r}>0;\tau _{r^{\delta }}\leq r^{2\delta
},\left\vert \tau _{r}-ra\right\vert \leq r^{1-\delta }\right] \\
& \geq \mathbb{E}\left[ 1-\prod\limits_{k=0}^{\hat T-1}Q_{\tau _{r}-\tau
_{k}}^{\hat Y_{k}};\hat T\leq r^{\delta },\tau _{r^{\delta }}\leq
r^{2\delta },\left\vert \tau _{r}-ra\right\vert \leq r^{1-\delta }\right] \\
& \geq \mathbb{E}\left[ 1-\prod\limits_{k=0}^{\hat T-1}f_{k}^{\hat X_{k}}(1,Q_{2ra});\hat T\leq r^{\delta },\tau _{r^{\delta }}\leq r^{2\delta
},\left\vert r^{-1}\tau _{r}-a\right\vert \leq r^{-\delta }\right] \\
& \geq \mathbb{E}\left[ 1-\prod\limits_{k=0}^{\hat T-1}f_{k}^{\hat X_{k}}(1,Q_{2ra})\right] -\mathbb{P}\left(\hat  T>r^{\delta }\right) -\mathbf{%
P}\left( \tau _{r^{\delta }}>r^{2\delta }\right) -\mathbb{P}\left(
\left\vert r^{-1}\tau _{r}-a\right\vert >r^{-\delta }\right) \\
& =\mathbb{E}\left[ 1-\prod\limits_{k=0}^{\hat T-1}f_{k}^{\hat X_{k}}(1,Q_{2ra})\right] +o\left( \frac{1}{\log ^{1+\rho }r}\right) .
\end{align*}%
Hence, applying arguments used to derive (\ref{Up}) in Theorem \ref%
{L_general} one can show that for some $\hat K_0>0$
\begin{equation*}
\limsup_{r\rightarrow \infty }\,\left\{ \log r\cdot \mathbb{P}\left[
\hat Z_{r}>0\right] \right\} \leq \hat K_0 \leq \liminf_{r\rightarrow \infty }\,\left\{ \log r\cdot \mathbb{P}\left[
\hat Z_{r}>0\right] \right\}
\end{equation*}%
proving \eqref{step1}.

To demonstrate that \eqref{step2} follows from \eqref{step1} observe first that due to
\begin{equation}
\mathbb{P}[ \hat X_{r}>0] =O(r^{-1/2}),\quad r\rightarrow \infty,
\label{last} 
\end{equation}
we have 
\[
\mathbb{P}\left[\hat X_{r}+
\hat Z_{r}>0\right] \sim \frac{\hat K_0}{\log r},\quad r\rightarrow \infty.
\]
Setting $N_n:=\max \left\{ k:\tau _{k}\leq n\right\}$ we obtain
\begin{equation}
\mathbb{P}\left[ \hat X_{N_n+1}+\hat Z_{N_n+1}>0\right] \leq 
\mathbb{P}\left[ X_{n}+Z_{n}>0\right] \leq \mathbb{P}\left[ \hat X_{N_n}+\hat Z_{N_n}>0\right],   \label{2ine}
\end{equation}%
and for any $\varepsilon \in \left( 0,1\right) $ we get
\begin{align*}
\mathbb{P}\left[\hat  X_{N_n}+\hat Z_{N_n}>0\right] & =\mathbb{P}%
\left[\hat  X_{N_n}+\hat Z_{N_n}>0;N_n\geq a^{-1}n(1-\varepsilon
)\right] \\
& +O\left( \mathbb{P}\left[ N_n<a^{-1}n(1-\varepsilon )\right] \right) .
\end{align*}%
It follows that
\begin{equation*}
\mathbb{P}\left[\hat  X_{N_n}+\hat Z_{N_n}>0;N_n\geq
a^{-1}n(1-\varepsilon )\right] \leq \mathbb{P}\left[
\hat X_{a^{-1}n(1-\varepsilon )}+\hat Z_{a^{-1}n(1-\varepsilon )}>0\right] .
\end{equation*}%
On the other hand, again by \eqref{reg1} as $n\rightarrow \infty $ 
\begin{equation*}
\mathbb{P}\left[ N_n<a^{-1}n(1-\varepsilon )\right] =\mathbb{P}\left[
\tau _{a^{-1}n(1-\varepsilon )}>n\right] =o\left( (\log n)^{-1-\rho }\right) .
\end{equation*}%
Thus, 
\begin{align*}
\limsup_{n\rightarrow \infty }\,& \left\{ \log n\cdot \mathbb{P}\left[
X_{n}+Z_{n}>0\right] \right\} \\
& \leq \limsup_{n\rightarrow \infty }\,\left\{ \log n\cdot \mathbb{P}\left[
\hat X_{a^{-1}n(1-\varepsilon )}+\hat Z_{a^{-1}n(1-\varepsilon )}>0\right] \right\} \leq \hat K_0.
\end{align*}%
A similar estimate from below follows from 
\begin{align*}
\mathbb{P}& \left[\hat  X_{N_n+1}+\hat Z_{N_n+1}>0\right] \\
& \qquad \geq \mathbb{P}\left[\hat  X_{a^{-1}n(1+\varepsilon )}+\hat Z_{a^{-1}n(1+\varepsilon )}>0;N_n+1\leq a^{-1}n(1+\varepsilon
)\right] \\
& \qquad =\mathbb{P}\left[\hat  X_{a^{-1}n(1+\varepsilon )}+\hat 
Z_{a^{-1}n(1+\varepsilon )}>0\right] +o\left( (\log n)^{-1-\rho }\right).
\end{align*}
Finally,  relation  \eqref{lost}  is derived from  \eqref{last} by the law of large numbers argument.
 
\end{proof}

\section{Subcritical processes with a Markovian environment \label{S_SM}}

Assume now that the two-type subcritical process $\ (X_{n},Z_{n})$ evolves
in a stationary Markovian random environment as defined in Section \ref{Sme}%
. Here, similarly to Section \ref{S_CM} the auxiliary branching process $%
\left(\hat  X_{r}, \hat Z_{r}\right) $ in IID environment with
probability generating functions (\ref{DefF}) and (\ref{Defh}) plays an
important role.

Single type subcritical processes with a Markovian environment were recently studied in \cite{Dy}. According to  \cite{Dy} under the conditions of our next theorem one has, similarly to \eqref{3}, that
\begin{equation*}
\mathbb{P}\left[ X_{n}>0\right] =o\left(
A^{n}\right) \mbox { for some constant
$A\in \left( 0,1\right) $}. 
\end{equation*}%

\begin{theorem}\hspace{-1.5mm}.
\label{L_subMark} Assume that assumption \eqref{Crit} holds, 
\begin{equation}
\mathbb{E}[\zeta]<0,\quad \mathrm{Var}[\hat  \zeta] \in \left( 0,\infty \right) ,  \label{l25}
\end{equation}%
and conditions \eqref{defkap}, \eqref{cond} and either  \eqref{condg0} or %
\eqref{condg} are valid for the corresponding random variables related to
the embedded process $( \hat X_{r}, \hat Z_{r}) $ with the
key constant $\kappa $ replaced by $\hat \kappa>0$. Suppose, in addition,
that 
\begin{equation}
\quad \mathbb{P}\left[ \tau >x\right] =o\left( x^{-1-\min (\hat \kappa,1)}\right), \quad x\rightarrow \infty .  \label{K1}
\end{equation}%
Then, there exists a constant $\hat K\equiv \hat K_{\hat \kappa}>0$, given by \eqref{Kas1} and \eqref{Kas2} below,
such that, see \eqref{qka},
\begin{equation}
\mathbb{P}\left[Z_{n}>0\right] \sim a^{\min(1,\hat \kappa)}\hat  Kq_{\hat \kappa}\left( n\right),\quad n\rightarrow \infty.  \label{SurSM}
\end{equation}
\end{theorem}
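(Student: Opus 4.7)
The plan is to follow the two-step strategy already used in the proof of Theorem \ref{L_Markov}: first establish the asymptotic
\[\mathbb{P}[\hat Z_r>0]\sim \hat K\,q_{\hat\kappa}(r),\quad r\to\infty,\]
for the embedded IID process $(\hat X_r,\hat Z_r)$ of Section \ref{Sme}, and then transfer it to $(X_n,Z_n)$ through the regenerative coupling $N_n=\max\{k:\tau_k\le n\}$. The hypotheses of the theorem are exactly the translations to the embedded process of the hypotheses of Theorem \ref{T_Dsubcrit}, so \cite{V10} delivers $\mathbb{P}[\hat W_{\hat T}>x]\sim \hat C_{\hat\kappa}x^{-\hat\kappa}$. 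Starting from
\[\mathbb{P}[\hat Z_r>0]=\mathbb{P}[Z_{\tau_r}>0]=\mathbb{E}\Big[1-\prod_{k=0}^{r-1}Q_{\tau_r-\tau_k}^{\hat Y_k}\Big],\]
the exponential smallness $\mathbb{P}[\hat T>c\log r]=o(A^{c\log r})$ furnished by \cite{Dy} allows one to replace $\tau_r-\tau_k$ by $\tau_r$ uniformly on $\{\hat T\le c\log r\}$. Since type 2 evolves in a constant environment, $1-Q_{\tau_r}\sim 2/(m_2\tau_r)\sim 2/(am_2 r)$ by the LLN for $\tau_r/r\to a$, and the Tauberian argument used in the proof of Theorem \ref{T_Dsubcrit}, now applied to $\mathbb{E}[1-e^{-\lambda \hat W_{\hat T}}]$ with $\lambda\sim 2/(am_2 r)$, yields $\hat K$. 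The formulas \eqref{Kas1}--\eqref{Kas2} for $\hat K$ are thus obtained from \eqref{ReprK} via the substitutions $m_2\to am_2$ and $C_\kappa\to \hat C_{\hat\kappa}$.

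For the second step I would exploit that extinction of $X+Z$ is an absorbing event; together with $\tau_{N_n}\le n<\tau_{N_n+1}$ this gives the bracketing
\[\mathbb{P}\big[\hat X_{N_n+1}+\hat Z_{N_n+1}>0\big]\le\mathbb{P}[X_n+Z_n>0]\le\mathbb{P}\big[\hat X_{N_n}+\hat Z_{N_n}>0\big],\]
exactly as in \eqref{2ine}. The $X$-contribution is negligible thanks to $\mathbb{P}[X_n>0]=o(A^n)$ (cited from \cite{Dy} just before the theorem). Conditioning on $\{|N_n-n/a|\le\varepsilon n\}$ and applying Step 1 at $r_\pm=(1\pm\varepsilon)n/a$, the regular variation of $q_{\hat\kappa}$ in each of the three regimes of \eqref{qka} gives $q_{\hat\kappa}(n/a)\sim a^{\min(1,\hat\kappa)}q_{\hat\kappa}(n)$. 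Letting $\varepsilon\downarrow 0$ then produces \eqref{SurSM}.

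The main obstacle is the calibration of the tail condition \eqref{K1}: its exponent $1+\min(\hat\kappa,1)$ is tuned precisely so that $\mathbb{P}[|N_n-n/a|>\varepsilon n]$ is $o(q_{\hat\kappa}(n))$ simultaneously in all three regimes of \eqref{qka}. The coarser control \eqref{reg1} that was sufficient in the critical Markovian case no longer suffices here because the survival probability now decays much faster than $1/\log n$, forcing the use of a genuine renewal-tail large-deviation bound in place of the mere convergence-in-probability of $N_n/n\to 1/a$.
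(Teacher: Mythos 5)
Your proposal follows essentially the same route as the paper: the tail asymptotics $\mathbb{P}[\hat W_{\rm T}>x]\sim \hat C x^{-\hat\kappa}$ from \cite{V10} for the embedded process, the Tauberian argument of Theorem \ref{T_Dsubcrit} with $-\log Q_{\tau_r}\sim 2/(am_2r)$, the bracketing \eqref{2ine} via $N_n$, and the Heyde--Rohatgi rate under \eqref{K1} to make the renewal deviations $o(q_{\hat\kappa}(n))$, yielding the factor $a^{\min(1,\hat\kappa)}$. Your reading of why the exponent in \eqref{K1} is calibrated to $1+\min(\hat\kappa,1)$ is exactly the role it plays in the paper's proof.
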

\begin{proof}.
Our main arguments here are similar to that used in the
proof of Theorem \ref{L_Markov}. Fix $\varepsilon \in \left( 0,1\right) $
and a sufficiently large $N$ and with $a=\mathbb{E}(\tau)$
write%
\begin{eqnarray*}
\mathbb{P}\left[\hat  Z_{r}>0\right] &=&\mathbb{P}\left[\hat  Z_{r}>0;%
\mathcal{B}\left( r,\varepsilon \right) \right] \\
&&+O\left( \mathbb{P}\left[ \tau _{N\log r}>r^{\hat \kappa}\log
^{3}r\right] +\mathbb{P}\left[ \left\vert \tau _{r}-ra\right\vert
>r\varepsilon \right] \right) .
\end{eqnarray*}%
where%
\begin{equation*}
\mathcal{B}\left( r,\varepsilon \right) :=\left\{ \tau _{N\log r}\leq
r^{\hat \kappa}\log ^{3}r,\left\vert \tau _{r}-ra\right\vert \leq
\varepsilon r\right\}.
\end{equation*}%
Clearly,%
\begin{equation}
\mathbb{P}\left[ \tau _{N\log r}>r^{\hat \kappa}\log ^{3}r\right] \leq 
\frac{Na}{r^{\hat \kappa}\log ^{2}r}=o\left( r^{-\hat \kappa}\right).  \label{Com1}
\end{equation}%
Further, if $\hat \kappa<1$ then, according to \cite{HR67} under condition (\ref{K1}) we have
\begin{equation}
\mathbb{P}\left[\ \left\vert \tau _{r}-ra\right\vert >\varepsilon r\right]
=o\left( r^{-\hat \kappa}\right) .  \label{Com2}
\end{equation}
Thus, 
\begin{eqnarray*}
\mathbb{P}\left[\hat  Z_{r}>0\right] \geq \mathbb{P}\ \left[
\hat Z_{r}>0;\mathcal{B}\left( r,\varepsilon \right) \right] =\mathbb{E}\left[ 1-\prod\limits_{k=0}^{r-1}Q_{\tau _{r}-\tau
_{k}}^{\hat Y_{k}};\mathcal{B}\left( r,\varepsilon \right) \right],
\end{eqnarray*}%
and therefore, denoting by $\mathcal{\bar{B}}\left( r,\varepsilon \right) $ the event complementary to $\mathcal{B}\left( r,\varepsilon \right) $, we get
\begin{eqnarray*}
\mathbb{P}\left[\hat  Z_{r}>0\right] &\geq &\mathbb{E}\left[ 1-\prod\limits_{k=0}^{\hat T-1}Q_{\tau _{r}-\tau
_{k}}^{\hat Y_{k}}; \hat T\leq N\log r;\mathcal{B}\left( r,\varepsilon
\right) \right] \\
&\geq &\mathbb{E}\left[ 1-\prod\limits_{k=0}^{\hat T-1}Q_{ra+2\varepsilon
r}^{\hat Y_{k}};\hat T\leq N\log r;\mathcal{B}\left( r,\varepsilon
\right) \right] \\
&=&\mathbb{E}\left[ 1-e^{\hat W_{\rm T}\log Q_{ra+2\varepsilon r}}\right] -%
\mathbb{P}\left[\hat  T>N\log r\right] -\mathbb{P}\left[ \mathcal{\bar{B}}%
\left( r,\varepsilon \right) \right].
\end{eqnarray*}%
Due to (\ref%
{pro}) and (\ref{l25}) 
\begin{equation}
\mathbb{P}\left[ \hat X_{r}>0\right] =\mathbb{P}\left[ \hat T>r\right]
=o\left( A^{r}\right) \text{ for some }A<1.  \label{nn}
\end{equation}%
It follows that in view of (\ref{pro}), (\ref{Com1}) and (\ref{Com2}) one
can find $N$ such that 
\begin{equation*}
\mathbb{P}\left[\hat  T>N\log r\right] +\mathbb{P}\left[ \mathcal{\bar{B}}%
\left( r,\varepsilon \right) \right] =o\left( r^{-\hat \kappa}\right).
\end{equation*}%
On the other hand, using (\ref{ProbExt}) and 
\begin{equation*}
\mathbb{P}\left[\hat  W_{\rm T}>y\right] \sim \hat  Cy^{-\hat \kappa},\;\hat C\in \left( 0,\infty
\right) ,
\end{equation*}%
one can show, arguing as in Theorem \ref{T_Dsubcrit}, that for $\hat \kappa<1$ 
\begin{align*}
\liminf_{\varepsilon \downarrow 0}& \lim_{r\rightarrow \infty }r^{\hat \kappa}\mathbb{E}\left[ 1-e^{\hat W_{\rm T}\log Q_{ra+2\varepsilon r}}%
\right] \\
& =\liminf_{\varepsilon \downarrow 0}\lim_{r\rightarrow \infty }\frac{%
r^{\hat \kappa}}{-\log Q_{ra+2\varepsilon r}}\int_{0}^{\infty }\mathbb{P}%
\left[\hat  W_{\rm T}>x\right] e^{x\log Q_{ra+2\varepsilon r}}dx \\
& =\hat C\Gamma \left( 1-\hat \kappa\right)
\liminf_{\varepsilon \downarrow 0}\left( \frac{2}{m_{2}a\left( 1+2\varepsilon \right) }\right)
^{\hat \kappa}
\end{align*}%
giving 
\begin{equation*}
\liminf_{r\rightarrow \infty }r^{\hat \kappa}\mathbb{P}\left[
\hat Z_{r}>0\right] \geq \hat K
\end{equation*}%
with
\begin{equation}
\hat K=\hat C\Gamma \left( 1-\hat \kappa\right) \left( \frac{2}{m_{2}a}\right) ^{\hat \kappa} \mbox{ for }\hat \kappa<1.
\label{Kas1}
\end{equation}%
A similar upper bound in view of (\ref{EstEqv}) and (\ref{nn}) yields 
\begin{equation*}
\lim_{r\rightarrow \infty }r^{\hat \kappa}\mathbb{P}\left[\hat  X_{r}+\hat Z_{r}>0\right] =\hat K.
\end{equation*}

If $\hat \kappa\geq 1$ then condition (\ref{K1}) entails%
\begin{equation*}
\mathbb{P}\left[ \ \left\vert \tau _{r}-ra\right\vert >\varepsilon r\right]
=o\left( r^{-1}\right),
\end{equation*}%
and, as before, this implies 
\begin{equation*}
\lim_{r\rightarrow \infty }\left( q_{\hat \kappa}(r)\right) ^{-1}\mathbf{%
P}\left[\hat  X_{r}+\hat Z_{r}>0\right] =\hat K,
\end{equation*}%
where 
\begin{equation}
\hat K=\frac{2}{m_{2}a}\cdot \left\{ 
\begin{array}{lll}
\hat C, & \text{if} & \hat \kappa=1, \\ 
\int_{0}^{\infty }\mathbb{P}\big[\hat  W_{\rm T}>x\big] dx, & \text{if} & 
\hat \kappa>1.%
\end{array}%
\right.
\label{Kas2}
\end{equation}

We proceed by recalling \eqref{2ine}. For any $\varepsilon \in \left(
0,1\right) $ 
\begin{align*}
\mathbb{P}\left[\hat  X_{N_n}+\hat Z_{N_n}>0\right] & =\mathbb{P}%
\left[\hat  X_{N_n}+\hat  Z_{N_n}>0;N_n\geq a^{-1}n(1-\varepsilon
)\right] \\
& +O\left( \mathbb{P}\left[N_n<a^{-1}n(1-\varepsilon )\right] \right) ,
\end{align*}%
and as $n\rightarrow \infty $ 
\begin{align*}
\mathbb{P}& \left[\hat  X_{N_n}+\hat Z_{N_n}>0;N_n\geq a^{-1}n(1-\varepsilon )\right] \\
& \quad \leq \mathbb{P}\left[\hat  X_{a^{-1}n\left( 1-\varepsilon \right) }+\hat Z_{a^{-1}n\left( 1-\varepsilon \right) }>0\right] \sim q_{\hat \kappa}\left( a^{-1}n\left( 1-\varepsilon \right) \right) \hat K.
\end{align*}%
It follows from \cite{HR67} and our conditions that 
\begin{equation*}
\mathbb{P}\left[ N_n<a^{-1}n(1-\varepsilon )\right] =\mathbb{P}\left[
S_{a^{-1}n\left( 1-\varepsilon \right) }>n\right] =o\left( n^{-\min (\hat \kappa,1)}\right) ,\ n\rightarrow \infty .
\end{equation*}%
Thus,%
\begin{align*}
\limsup_{n\rightarrow \infty }& \left( q_{\hat \kappa}(n)\right) ^{-1}%
\mathbb{P}\left[ X_{n}+Z_{n}>0\right] \\
& \leq \lim_{\varepsilon \downarrow 0}\limsup_{n\rightarrow \infty }\left(
q_{\hat \kappa}(n)\right) ^{-1}\mathbb{P}\left[ \hat X_{N_n}+\hat Z_{N_n}>0\right],
\end{align*}%
so that 
\begin{equation*}
\limsup_{n\rightarrow \infty }\left( q_{\hat \kappa}(n)\right) ^{-1}%
\mathbb{P}\left[ X_{n}+Z_{n}>0\right] \leq a^{\min(1,\hat \kappa)}\hat K.
\end{equation*}%
The corresponding lower bound is obtained similarly.
\end{proof}

\section*{Acknowledgments}

ED was supported in part by the Program of RAS \textquotedblleft Theoretical Problems of Contemporary Mathematics\textquotedblright\ and by INTAS grant No.
03-51-5018. SS was supported by Swedish Research Council grant No.
621-2010-5623. VV was supported in part by the Program of RAS
\textquotedblleft Theoretical Problems of Contemporary Mathematics\textquotedblright\
and by a Wenner-Gren Foundation Visiting Researcher Scholarship.

%
%
%
%
%

\end{document}